\documentclass[sn-mathphys,Numbered]{sn-jnl}
\usepackage{amsmath,amssymb,amsthm,bm,mathrsfs}
\usepackage{graphicx}
\usepackage{hyperref}
\usepackage{xcolor}
\usepackage[T1]{fontenc}
\usepackage{multirow}
\usepackage{comment}
\usepackage{amsfonts}
\usepackage[title]{appendix}
\usepackage{textcomp}
\usepackage{manyfoot}
\usepackage{listings}
\usepackage{booktabs}
\usepackage{algorithm,algorithmic}

\usepackage{aliascnt}
\theoremstyle{plain}
\newtheorem{theorem}{Theorem}
\newaliascnt{lemma}{theorem}

\aliascntresetthe{lemma}
\newaliascnt{proposition}{theorem}
\newtheorem{proposition}[proposition]{Proposition}
\aliascntresetthe{proposition}
\newaliascnt{corollary}{theorem}
\newtheorem{corollary}[corollary]{Corollary}
\aliascntresetthe{corollary}
\newaliascnt{assumption}{theorem}
\newtheorem{assumption}[assumption]{Assumption}
\aliascntresetthe{assumption}

\newtheorem{remark}{Remark}
\usepackage[capitalise,nameinlink,noabbrev]{cleveref}
\crefname{equation}{}{}
\crefname{assumption}{Assumption}{Assumptions}
\crefname{algocf}{Algorithm}{Algorithms}
\graphicspath{{fig/}}
\definecolor{bkgndcolor}{rgb}{1,1,1}
\definecolor{bkgndcolor}{rgb}{0.78,0.93,0.8}
\allowdisplaybreaks

\DeclareMathOperator*{\argmin}{\arg\min}

\DeclareMathOperator{\diag}{diag}
\DeclareMathOperator{\spec}{spec}

\DeclareMathOperator{\range}{range}

\DeclareMathOperator{\Span}{span}

\DeclareMathOperator{\order}{\mathcal{O}}

\newcommand{\defi}{:=}

\newcommand*{\set}[1]{\left\lbrace#1\right\rbrace}

\newcommand*{\Htran}{\mathsf{H}}

\newcommand{\out}{\mathsf{out}}

\newcommand{\bmat}[1]{\begin{bmatrix}#1\end{bmatrix}}

\newcommand{\abs}[1]{\lvert#1\rvert}
\newcommand{\norm}[1]{\lVert#1\rVert}
\newcommand{\bigabs}[1]{\bigl\lvert#1\bigr\rvert}

\newcommand{\bignorm}[1]{\bigl\lVert#1\bigr\rVert}
\newcommand{\Bignorm}[1]{\Bigl\lVert#1\Bigr\rVert}
\newcommand*{\md}{\mathop{}\mathopen{}\mathrm{d}}
\newcommand*{\mi}{\mathrm i}
\newcommand{\nlf}{F_{\mathsf{nl}}^{[N]}}
\newcommand{\C}{\mathbb{C}}

\newcommand{\region}{\mathcal{D}}
\newcommand{\cif}{F_{\mathsf{ci}}}
\newcommand{\itf}{F_{\mathsf{it}}}
\newcommand{\itfo}{\mathcal{F}_{\mathsf{it}}}
\newcommand{\epsnl}{\epsilon_{\mathsf{nl}}}

\newcommand{\cnl}{c_{\mathsf{nl}}}
\renewcommand{\emph}[1]{\textbf{#1}}

\title[Convergence of contour integral-based eigensolvers for NEPs]{Linear convergence of iterative contour integral-based eigensolvers for nonlinear eigenvalue problems}

\author[1]{\fnm{Daniel} \sur{Kressner}}\email{daniel.kressner@epfl.ch}
\author[2]{\fnm{Yuqi} \sur{Liu}}\email{y.liu12@leeds.ac.uk}
\author[3]{\fnm{Jose E.} \sur{Roman}}\email{jroman@dsic.upv.es}
\author[4]{\fnm{Meiyue} \sur{Shao}}\email{myshao@fudan.edu.cn}
\author*[1]{\fnm{Nian} \sur{Shao}}\email{nian.shao@epfl.ch}

\affil[1]{\orgdiv{Institute of Mathematics}, \orgname{EPFL}, \orgaddress{\city{Lausanne}, \postcode{1015},  \country{Switzerland}}}
\affil[2]{\orgdiv{School of Computer Science}, \orgname{University of Leeds}, \orgaddress{\city{Leeds}, \postcode{LS2 9JT},  \country{UK}}}
\affil[3]{\orgdiv{Departament de Sistemes Inform\`atics i Computaci\'o}, \orgname{Universitat Polit\`ecnica de Val\`encia}, \orgaddress{\city{Val\`encia}, \postcode{46022}, \country{Spain}}}
\affil[4]{\orgdiv{School of Data Science and Shanghai Key Laboratory for Contemporary Applied Mathematics}, \orgname{Fudan University}, \orgaddress{\city{Shanghai}, \postcode{200433},  \country{China}}}

\begin{document}

\abstract{ 
Solving nonlinear eigenvalue problems is an important and challenging task in scientific computing.
Contour integral-based approaches are attractive for such eigenvalue problems because they reliably target all eigenvalues in a prescribed domain.
However, unlike in the linear case, many traditional methods of this type, such as Beyn's method, lack an inherent iterative refinement mechanism. Consequently, achieving high accuracy requires high-quality quadrature rules for approximating the contour integral, which often leads to prohibitive computational costs.
A notable exception is the so-called NLFEAST algorithm, which combines contour integral techniques with a nonlinear Rayleigh--Ritz extraction step.
In this work, we propose a general framework of iterative contour integral-based methods for nonlinear eigenvalue problems that includes NLFEAST. This allows us to prove linear convergence of NLFEAST under mild assumptions and also explains why certain nonlinear eigensolvers do not combine well with iterative methods.
Numerical experiments confirm our theoretical findings; in particular that NLFEAST can achieve high accuracy even with a limited number of quadrature nodes, significantly outperforming Beyn's method on challenging problems.
}
\keywords{Nonlinear eigenvalue problems, contour integral, convergence, NLFEAST}
\pacs[AMS subject classifications (2020)]{65H17, 65F15} 
\maketitle

\section{Introduction}
\label{sec:intro}
This work is concerned with nonlinear eigenvalue problems (NEPs) \cite{GT2017,MV2004} of the form
\begin{equation}
\label{eq:nep}
T(\lambda)u=0,
\qquad u\in\C^n\setminus\set{0},
\qquad \lambda\in\region.
\end{equation}
Throughout this paper, we assume that \(\region\subset\C\) is a nonempty domain enclosed by a Jordan curve, and $T(\cdot)$ is an $n\times n$ holomorphic matrix-valued function on some domain that contains the closure of $\region$. 
We also assume that the NEP~\cref{eq:nep} is regular, that is, that  $\det T(\xi_{0})\neq 0$ for some $\xi_{0}\in\region$.
In the particular case \(T(\xi)=\xi I-A\), the NEP~\eqref{eq:nep} reduces to a standard eigenvalue problem for the matrix $A$.
NEPs arise in a broad range of scientific computing applications; see~\cite{TH2001,CER2020,LYG2019,BV2007} and the references therein.

Many algorithms for solving NEPs are based on rational approximation.
Three types of rational approximation are popular for
this purpose: Approximations to the indicator function of the
domain~\cite{Polizzi2009,SS2003,Beyn2012,HSS2016}, approximations using a surrogate function of the resolvent \cite{BST2024}, and approximations of the matrix-valued function $T(\cdot)$ itself~\cite{GNT2022,GKV2024,SEM2019,SRK2014,RKW2015}.
In this paper, we focus on the first type, commonly referred to as
contour integral-based methods \cite{BEG2023}, in which the indicator function of the target domain is approximated by a discretized contour integral.
These methods have become popular in recent years due to their relatively straightforward implementation and great potential for parallelization.

Let $\chi_{\region}$ denote the indicator function of $\region$, that is, $\chi_{\region}(\xi)=1$ if $\xi\in\region$ and $\chi_{\region}(\xi)=0$ otherwise. Trivially, this function admits the contour integral representation 
\begin{equation*}
    \chi_{\region}(z) = \frac{1}{2\pi\mi }\oint_{\partial \region}(\xi-z)^{-1}\md \xi.
\end{equation*}
For a standard eigenvalue problem $T(\xi)=\xi I-A$, inserting $A$ into this relation gives 
\begin{equation} \label{eq:indA}
    \chi_{\region}(A) = \frac{1}{2\pi\mi }\oint_{\partial \region}(\xi I-A)^{-1}\md \xi,
\end{equation}
which coincides with the spectral projector of $A$ belonging to the eigenvalues in $\region$.
More generally, we can consider higher-order moments:
\begin{equation}
\label{eq:defmon}
M_s=\frac{1}{2\pi\mi}\oint_{\partial\region}\xi^s T(\xi)^{-1}\md\xi,
\qquad s=0,1,\dotsc.
\end{equation}
Often, one can extract all relevant spectral information directly from the matrix pencil $M_{1} - \xi M_{0}$. When a large number of eigenvalues are required, in particular when this number exceeds $n$, it can be preferable to extract the spectral information from the (block) Hankel matrix pencil
\begin{equation*}
    \begin{bmatrix}
        M_{1} & M_{2} & \cdots & M_{s+1}\\ 
        M_{2} & M_{3} & \cdots & M_{s+2} \\
        \vdots & \vdots & \ddots & \vdots \\
        M_{s+1} & M_{s+2} & \cdots & M_{2s+1}
    \end{bmatrix}- \xi \cdot
    \begin{bmatrix}
        M_{0} & M_{1} & \cdots & M_{s}\\ 
        M_{1} & M_{2} & \cdots & M_{s+1} \\
        \vdots & \vdots & \ddots & \vdots \\
        M_{s} & M_{s+1} & \cdots & M_{2s}
    \end{bmatrix}.
\end{equation*}
By replacing the contour integral in~\cref{eq:defmon} with a discretized version and probing the moments, these two strategies lead to Beyn's method~\cite{Beyn2012} and the nonlinear (block) Sakurai--Sugiura (SS) method~\cite{AST2009}.
Specifically, let $M_{0,N}$ and $M_{1,N}$ be approximations of $M_{0}$ and $M_{1}$, respectively, such as those obtained from numerical quadrature with $N$ quadrature nodes. Given a generic probing matrix $X$, typically chosen as a (complex) Gaussian random matrix, Beyn's method \cite{Beyn2012} essentially computes the eigenvalues of $(M_{0,N}X)^{\dagger}(M_{1,N}X)$, where $(\cdot)^{\dagger}$ denotes the Moore--Penrose pseudoinverse of a matrix.
To achieve high accuracy, the standard approach is to increase the number of quadrature nodes, which substantially raises the computational cost because more matrix factorizations, such as (sparse) LU factorizations, are required.
One could also try to refine accuracy by replacing the generic probing matrix $X$ with the newly computed approximate eigenvectors, like in subspace iteration. However, such a fictitious iterative refinement fails to converge in general. As an illustrative example, consider the \texttt{butterfly} problem from the NLEVP collection~\cite{BHM2013} of dimension \(n=64\). We aim at computing the \(11\) eigenvalues inside the circular domain centered at \(0.35 + 0.25\cdot\mi\) with radius \(0.1\), as shown in the left panel of \cref{fig:NLFEAST}.
When applying Beyn's method with this fictitious iterative refinement, the numerical results presented in the middle panel of \cref{fig:NLFEAST} show that the accuracy of Beyn's method can only be improved by increasing the number of quadrature nodes, but not via iterative refinement.

\begin{figure}[htbp]
\centering
\begin{tabular}{ccc}
\includegraphics[width=0.3\linewidth]{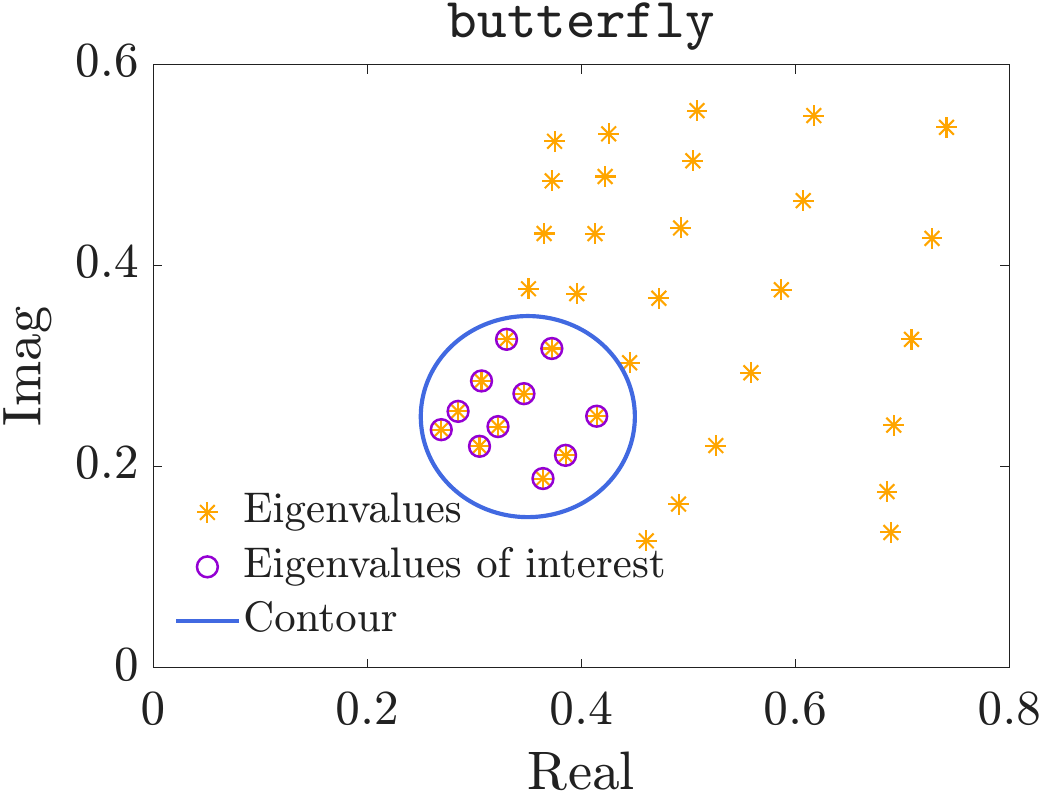}&
\includegraphics[width=0.3\linewidth]{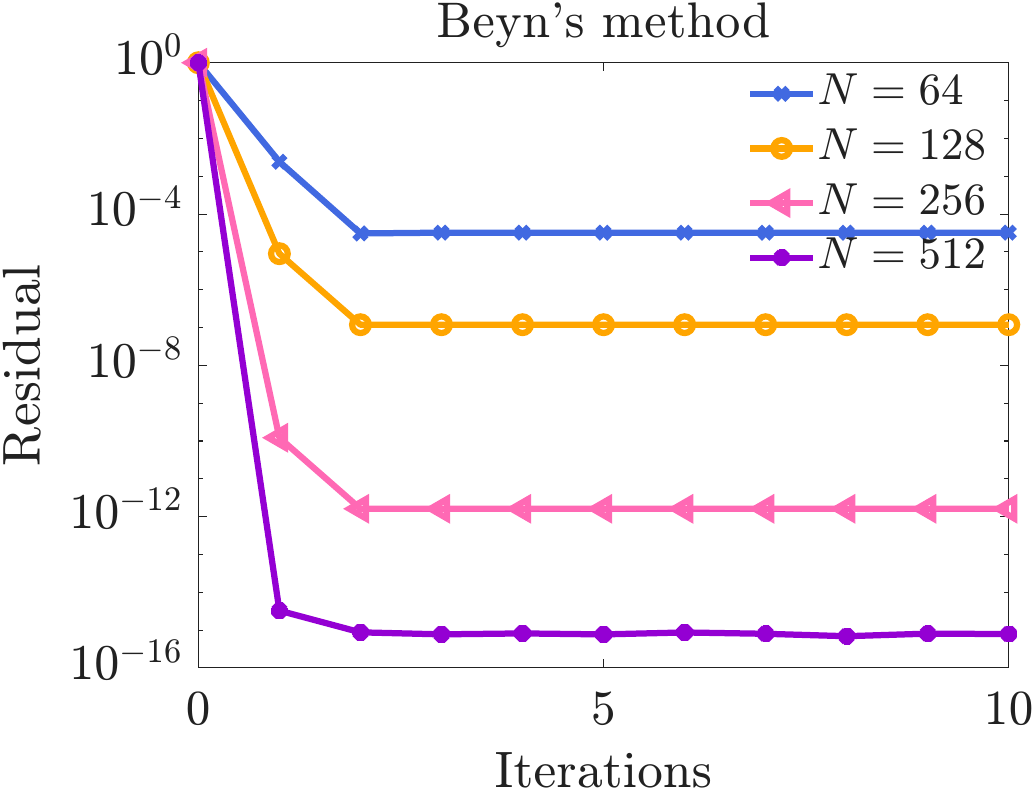}&
\includegraphics[width=0.3\linewidth]{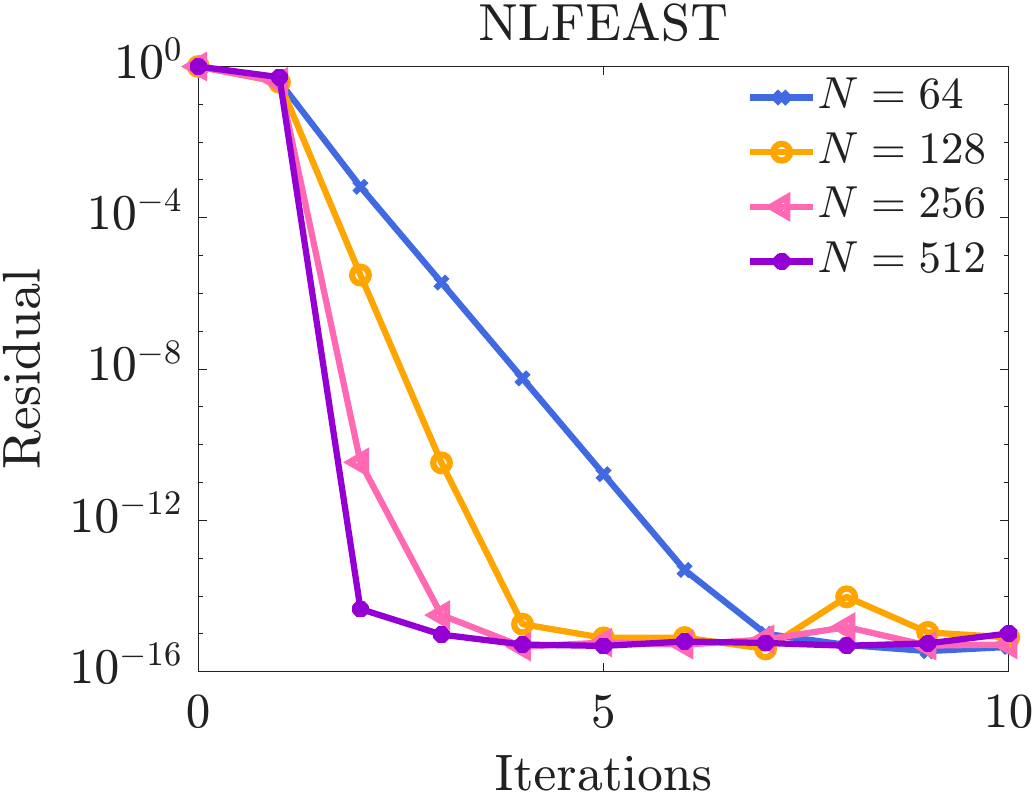}
\end{tabular}
\caption{Beyn's method with (fictitious) iterative refinement and NLFEAST for solving \texttt{butterfly} with dimension \(64\) and different number of quadrature nodes.
The left panel illustrates the domain of interest, which is the circular region centered at \(0.35 + 0.25\cdot\mi\) with radius \(0.1\).
The middle and right panels are convergence histories of iterative refinement applied to Beyn's method and NLFEAST, respectively.
}
\label{fig:NLFEAST}
\end{figure}

For standard eigenvalue problems, it is well known how to turn contour integral-based eigensolvers into iterative methods. For instance, FEAST~\cite{Polizzi2009} is mathematically equivalent to subspace iteration with $r(A)$, where $r$ is a rational approximation of $\chi_{\region}$, typically obtained from applying numerical quadrature to the contour integration representation. 
For NEPs, the nonlinear FEAST method (NLFEAST)~\cite{GMP2018} and its variant NLFEAST-Beyn~\cite{BP2020} have been proposed by combining this rational approximation with residual inverse iteration (RESINVIT)~\cite{Neumaier1985} (see \cref{sec:preliminary} for details). As illustrated in the right panel of \cref{fig:NLFEAST}, NLFEAST behaves empirically as an iterative method for NEPs, in the sense that its accuracy improves progressively with successive iterations.
Although NLFEAST has been widely adopted in scientific computing~\cite{SMB2021,LE2025,GP2021}, its convergence properties remain insufficiently understood from a theoretical standpoint. In contrast to the linear FEAST analysis~\cite{TP2014}, the convergence mechanism of NLFEAST is more intricate and cannot be interpreted simply as a subspace iteration, since the eigenvectors of $T(\xi)$ depend on $\xi$.
In this paper, we propose a framework for solving NEPs with iterative methods and establish local linear convergence under mild assumptions. In particular, our analysis clarifies the convergence behavior of NLFEAST.

The rest of this paper is organized as follows. In \cref{sec:preliminary}, we provide the necessary mathematical preliminaries, starting with a review of the RESINVIT algorithm and Keldysh's theorem. We then introduce a continuous operator for NEPs and characterize its core spectral recovery properties. In \cref{sec:conv}, we present our general framework for iterative nonlinear eigensolvers (\cref{algo:iree}) based on alternating extraction and filtering steps. By establishing a necessary condition for its convergence, we explain why certain contour integral-based nonlinear eigensolvers, such as Beyn's method, fail to improve their accuracy via iterative refinement. We then derive our main local linear convergence result in \cref{thm:main}, and apply this theory to establish the convergence of the NLFEAST algorithm in \cref{lem:nl}. Finally, numerical experiments are presented in \cref{sec:numexp} to validate our theoretical findings on challenging NEPs.

\paragraph{Notation.}
Throughout the paper, $\|\cdot\|$ denotes the Euclidean norm of a vector or the spectral norm of a matrix; $e_{i}$ denotes the $i$th unit vector in $\C^{n}$. For two subspaces $\mathcal{X}_{1}$ and $\mathcal{X}_{2}$ with bases $X_{1}$ and $X_{2}$, respectively, we use $\angle(\mathcal{X}_{1},\mathcal{X}_{2})$ and $\angle(X_{1},X_{2})$ interchangeably to denote the largest principal angle between them \cite[Sec.~6.4.3]{GV2013}. For a sequence of vectors $x_{1},\dotsc,x_{k}$, the notation $\Span\{x_{1},\dotsc,x_{k}\}$ stands for the range of $[x_{1},\dotsc,x_{k}]$. For a matrix $A$, we denote the set of its eigenvalues by $\spec(A)$. 
Throughout this work, the notation $\order(\cdot)$ absorbs constants depending on $T(\cdot)\colon \region\mapsto \C^{n\times n}$ only.

\section{RESINVIT and contour integrals}
\label{sec:preliminary}

As highlighted in~\cite{GMP2018}, NLFEAST can be viewed as an extension of RESINVIT.
Given a shift $\sigma\in\C$, a fixed vector $b\in\C^{n}$ and an initial $(\rho^{(0)},x^{(0)})$, RESINVIT is defined\footnote{While the expression for RESINVIT appears to require $\sigma \not= \rho^{(k)}$, it admits the continuous extension $x^{(k+1)} = T(\sigma)^{-1}T'(\sigma)x^{(k)}$ at $\sigma = \rho^{(k)}$. Here and in the analogous situation for NLFEAST, we tacitly assume that a continuous extension is used in such exceptional cases.   } 
\begin{align*}
    x^{(k+1)}
    &=  \frac{1}{\sigma-\rho^{(k)}}\Bigl(I - T(\sigma)^{-1}T(\rho^{(k)})\Bigr)x^{(k)},\\
    \rho^{(k+1)}
    &= \argmin_{\rho\in\C}\,\bigabs{b^{\Htran}T(\sigma)^{-1}T(\rho)x^{(k+1)}}
\end{align*}
for $k=0$, $1$, $\dotsc$.
If the shift $\sigma$ is chosen sufficiently close to a simple eigenvalue~$\lambda$ of $T(\cdot)$ with an associated eigenvector $u$, then linear convergence of approximate eigenpairs $(\rho^{(k)},x^{(k)})$ produced by RESINVIT to $(\lambda,u)$ is established in \cite{Neumaier1985}. 

When applied to the standard eigenvalue problem $T(\xi)=\xi I-A$, RESINVIT reduces to the shift-and-invert iteration
\begin{equation*}
    x^{(k+1)} = (\sigma I-A)^{-1}x^{(k)},
\end{equation*}
which is a robust algorithm to compute the eigenvalue of $A$ closest to the shift $\sigma$ together with its corresponding eigenvector. The shift-and-invert iteration can be extended to target all eigenvalues of $A$ contained in a prescribed domain $\region$ by iterating over a block of vectors and replacing 
$(\sigma I-A)^{-1}$ with rational approximation to the indicator function $\chi_\region(A)$. Exploiting the relation~\cref{eq:indA} to contour integrals, this leads to the FEAST method mentioned before.

For NEPs, RESINVIT can likewise be combined with a contour integral to compute all eigenvalues contained in a prescribed domain. We begin by formulating this method in a continuous setting. Define
\begin{equation}
    \label{eq:defcif}
\cif(\rho) \defi \frac{1}{2\pi\mi }\oint_{\partial \region}\frac{1}{\xi-\rho}\bigl(I-T(\xi)^{-1}T(\rho)\bigr)\md \xi.    
\end{equation}
For the standard eigenvalue problem $T(\xi)=\xi I-A$, $\cif$ is exactly the spectral projector $\chi_{\region}(A)$, where $\chi_{\region}(\cdot)$ is the indicator function of $\region$.
In order to understand \cref{eq:defcif} for NEPs, we first import the famous Keldysh's theorem \cite{Keldysh1971} characterizing the resolvent of $T(\cdot)$.
\begin{theorem}[Keldysh]
    \label{thm:Keldysh}
    Suppose that a holomorphic matrix-valued function $T(\cdot)$ has~$m$ eigenvalues in $\region$, counted with algebraic multiplicities. Then there exist $n\times m$ matrices $U_{\region}$ and $W_{\region}$, and a matrix $J_{\region}\in\C^{m\times m}$ in Jordan canonical form, such that
    \begin{equation*}
        T(\xi)^{-1}=U_{\region}(\xi I-J_{\region})^{-1}W_{\region}^{\Htran}+H(\xi)\quad\text{for all}\quad \xi\in\region\setminus\spec(J_{\region}),
    \end{equation*}
    where $H(\cdot)$ is a holomorphic matrix-valued function in $\region$. The Jordan structure of $J_{\region}$ reflects the multiplicities of the eigenvalues contained in $\region$.
\end{theorem}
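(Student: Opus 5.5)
The statement to be proved is Keldysh's theorem, which the paper imports from the literature. I would prove it via the local Smith form of a holomorphic matrix function, then glue the local principal parts into a single global representation on $\region$.

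\textbf{Step 1: isolated eigenvalues and local Smith form.} Since $T$ is regular, $\det T$ is a holomorphic scalar function that is not identically zero. It therefore has finitely many zeros $\lambda_1,\dots,\lambda_p$ in $\region$; finiteness uses holomorphy on a neighborhood of the closure. The algebraic multiplicity of $\lambda_j$ is its order as a zero of $\det T$, and these orders sum to $m$. Near each $\lambda_j$ there are holomorphic matrix functions $E(\xi)$, $F(\xi)$, invertible at $\lambda_j$, such that
\[
T(\xi)=E(\xi)\,\diag\bigl((\xi-\lambda_j)^{\kappa_1},\dots,(\xi-\lambda_j)^{\kappa_n}\bigr)\,F(\xi).
\]
The partial multiplicities satisfy $\sum_i\kappa_i=$ the algebraic multiplicity. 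This factorization follows from Smith-form elimination over the local ring of germs, which is a PID (indeed a DVR).

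\textbf{Step 2: local principal part in Jordan form.} Inverting the factorization gives
\[
T(\xi)^{-1}=F(\xi)^{-1}\diag\bigl((\xi-\lambda_j)^{-\kappa_i}\bigr)E(\xi)^{-1}.
\]
I expand $F^{-1}$ and $E^{-1}$ in Taylor series about $\lambda_j$, truncated at order $\kappa_i-1$ in the $i$th column/row. The singular part of the $i$th term is then
\[
\sum_{\ell=1}^{\kappa_i}(\xi-\lambda_j)^{-\ell}\sum_{a+b=\kappa_i-\ell}f_{i,a}\,e_{i,b}^{\Htran},
\]
where $f_{i,a}$ is the $a$th Taylor coefficient of the $i$th column of $F^{-1}$ and $e_{i,b}$ that of the $i$th row of $E^{-1}$. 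Writing $(\xi I-J_{\kappa_i}(\lambda_j))^{-1}$ for a Jordan block explicitly as $\sum_{\ell}(\xi-\lambda_j)^{-\ell}N^{\ell-1}$, with $N$ nilpotent, this singular part equals $U_{ij}(\xi I-J_{\kappa_i}(\lambda_j))^{-1}W_{ij}^{\Htran}$. Here $U_{ij}=[f_{i,0},\dots,f_{i,\kappa_i-1}]$ (a Jordan chain) and $W_{ij}$ is formed from the $e_{i,b}$ in reversed order. The key check is that the Hankel-type index structure $a+b=\kappa_i-\ell$ matches the entries of $N^{\ell-1}$ once the columns of $W_{ij}$ are reversed. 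Getting this index bookkeeping right is the main technical obstacle; I would verify it on a single block of size $\kappa$ first.

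\textbf{Step 3: globalize.} Stack all blocks into $U_\region$, $W_\region$ and $J_\region=\diag(J_{\kappa_i}(\lambda_j))$. This $J_\region$ has size $\sum_{i,j}\kappa_i=m$ and is in Jordan canonical form whose block structure reflects the partial multiplicities. Set
\[
H(\xi)=T(\xi)^{-1}-U_\region(\xi I-J_\region)^{-1}W_\region^{\Htran}.
\]
On $\region\setminus\spec(J_\region)$, $T^{-1}$ is holomorphic away from the $\lambda_j$. Near each $\lambda_j$, the subtracted term has exactly the same principal part as $T^{-1}$, while the contributions from the other $\lambda_k$ are holomorphic there. Hence every singularity of $H$ is removable, and $H$ extends holomorphically to all of $\region$, which completes the proof.
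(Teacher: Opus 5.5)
The paper gives no proof of this statement: it imports Keldysh's theorem from the literature and uses it as a black box. So your argument does not compete with an in-paper proof; it supplies one. It is correct, and it follows the standard finite-dimensional route via the local Smith form.

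The index bookkeeping you flagged does close. For the upper-triangular Jordan block used in the paper, $(\xi I-J_{\kappa}(\lambda))^{-1}$ has $(p,q)$ entry $(\xi-\lambda)^{-(q-p+1)}$ for $q\ge p$. Take $U_{ij}=[f_{i,0},\dotsc,f_{i,\kappa_i-1}]$ and $W_{ij}=[e_{i,\kappa_i-1},\dotsc,e_{i,0}]$, and set $a=p-1$, $b=\kappa_i-q$. Then the $(p,q)$ contribution is $(\xi-\lambda_j)^{a+b-\kappa_i}f_{i,a}e_{i,b}^{\Htran}$, and the constraint $p\le q$ becomes exactly $a+b\le\kappa_i-1$. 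This reproduces your Hankel-type singular part term by term.

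Your construction also buys something beyond the citation: an explicit identification of the factors. From $T(\xi)F(\xi)^{-1}=E(\xi)D(\xi)$:
\begin{itemize}
\item each $[f_{i,0},\dotsc,f_{i,\kappa_i-1}]$ is a Jordan chain;
\item the eigenvectors $f_{i,0}$ with $\kappa_i\ge 1$ are linearly independent, being columns of the invertible $F(\lambda_j)^{-1}$, and they span $\ker T(\lambda_j)$;
\item the chain lengths sum to the algebraic multiplicity.
\end{itemize}
Hence the columns of $U_\region$ belonging to $\lambda_j$ form a canonical system of Jordan chains. The paper's statement only says the Jordan structure ``reflects the multiplicities,'' but it uses this stronger fact afterwards when it identifies $U_\lambda$ with the canonical system of generalized eigenvectors.

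The Smith-form route is intrinsically finite-dimensional, since it relies on the ring of holomorphic germs at a point being a PID. That is all the $n\times n$ setting of the paper requires.

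One point is worth stating explicitly. The germ of $\det T$ at each $\lambda_j$ is nonzero, by regularity and the identity theorem on the connected domain of holomorphy. This guarantees that the local Smith form has no zero diagonal entries, i.e.\ all $\kappa_i$ are finite.
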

Keldysh's theorem stands as a theoretical foundation for contour integral-based nonlinear eigensolvers, because the holomorphic function $H$ can be annihilated by contour integration.
The following lemma evaluates the contour integral in \cref{eq:defcif} for $\rho\notin \spec(J_{\region})$.
\begin{theorem}
    \label{thm:cif}
    Consider the matrix-valued function $T(\cdot)$ in \cref{thm:Keldysh}.
    For any $\rho \in \region\setminus \spec(J_{\region})$, the function $\cif(\cdot)$ defined in \cref{eq:defcif} satisfies 
    \begin{equation*}
        \cif(\rho) = I-H(\rho)T(\rho) = U_{\region}(\rho I- J_{\region})^{-1}W_{\region}^{\Htran}T(\rho).
    \end{equation*}
\end{theorem}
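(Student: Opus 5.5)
The plan is to split the integrand in \cref{eq:defcif} into two pieces and evaluate each by the residue theorem, with Keldysh's theorem (\cref{thm:Keldysh}) supplying the structure of $T(\xi)^{-1}$. Fix $\rho\in\region\setminus\spec(J_{\region})$. Since $\rho\in\region$ lies strictly inside $\partial\region$, the scalar integral is
\begin{equation*}
\frac{1}{2\pi\mi}\oint_{\partial\region}\frac{1}{\xi-\rho}\md\xi=1.
\end{equation*}
Hence
\begin{equation*}
\cif(\rho)=I-\Bigl(\frac{1}{2\pi\mi}\oint_{\partial\region}\frac{T(\xi)^{-1}}{\xi-\rho}\md\xi\Bigr)T(\rho),
\end{equation*}
and $T(\rho)$ can be pulled out because it does not depend on $\xi$. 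The remaining task is to compute $G(\rho):=\frac{1}{2\pi\mi}\oint_{\partial\region}(\xi-\rho)^{-1}T(\xi)^{-1}\md\xi$.

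Next I insert the Keldysh decomposition $T(\xi)^{-1}=U_{\region}(\xi I-J_{\region})^{-1}W_{\region}^{\Htran}+H(\xi)$. This identity is stated only on $\region$, while the integral is over $\partial\region$. I would handle this by noting that $T(\cdot)^{-1}$ is meromorphic on a neighborhood of $\overline{\region}$ with no poles on $\partial\region$, since all $m$ eigenvalues lie in $\region$. Then $H=T^{-1}-U_{\region}(\cdot I-J_{\region})^{-1}W_{\region}^{\Htran}$ extends holomorphically to a neighborhood of $\overline{\region}$. Alternatively, one can shrink the contour slightly inside $\region$, which is legitimate by Cauchy's theorem.

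For the $H$ part, Cauchy's integral formula gives $\frac{1}{2\pi\mi}\oint_{\partial\region}(\xi-\rho)^{-1}H(\xi)\md\xi=H(\rho)$. Substituting this already yields the first equality $\cif(\rho)=I-H(\rho)T(\rho)$, provided the rational part contributes exactly $U_{\region}(\rho I-J_{\region})^{-1}W_{\region}^{\Htran}-U_{\region}(\rho I-J_{\region})^{-1}W_{\region}^{\Htran}=0$. That is the key computation. I claim
\begin{equation*}
\frac{1}{2\pi\mi}\oint_{\partial\region}\frac{(\xi I-J_{\region})^{-1}}{\xi-\rho}\md\xi=0.
\end{equation*}
To prove it, use the resolvent-type partial fraction identity
\begin{equation*}
\frac{(\xi I-J)^{-1}}{\xi-\rho}=(\rho I-J)^{-1}\Bigl(\frac{1}{\xi-\rho}I-(\xi I-J)^{-1}\Bigr),
\end{equation*}
which follows from $(\xi I-J)-(\rho I-J)=(\xi-\rho)I$ and the fact that the two resolvents commute. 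Integrating, the first term gives $(\rho I-J)^{-1}$. The second gives $(\rho I-J)^{-1}\cdot\frac{1}{2\pi\mi}\oint_{\partial\region}(\xi I-J)^{-1}\md\xi=(\rho I-J)^{-1}$, because $\spec(J_{\region})\subset\region$ makes the contour integral of the resolvent equal to $I$ (the spectral projector onto all eigenvalues). The difference is zero, proving the claim.

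Finally, the second equality follows from Keldysh's theorem evaluated at $\xi=\rho$. Multiplying $T(\rho)^{-1}=U_{\region}(\rho I-J_{\region})^{-1}W_{\region}^{\Htran}+H(\rho)$ on the right by $T(\rho)$ gives
\begin{equation*}
I-H(\rho)T(\rho)=U_{\region}(\rho I-J_{\region})^{-1}W_{\region}^{\Htran}T(\rho).
\end{equation*}
This requires $T(\rho)$ to be invertible, which holds when $\rho$ is not an eigenvalue, i.e.\ $\rho\notin\spec(J_{\region})$ by the multiplicity statement in Keldysh's theorem. The only delicate points are this spectral identification and the extension of $H$ to the boundary. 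The main algebraic step is the vanishing integral above.
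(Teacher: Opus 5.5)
Your proposal is correct and follows the paper's proof: you split the integral via Keldysh's theorem, evaluate the $H$-part by Cauchy's integral formula, show the rational part integrates to zero, and then reapply Keldysh at $\xi=\rho$ to get the second equality. The only difference is how the rational part is shown to vanish: the paper argues entrywise, using the explicit inverse of a Jordan block and the $\order(\abs{\xi}^{-2})$ decay of $(\xi-\lambda)^{-k}(\xi-\rho)^{-1}$, whereas you use the resolvent identity together with $\frac{1}{2\pi\mi}\oint_{\partial\region}(\xi I-J_{\region})^{-1}\md\xi=I$ (and you also justify extending $H$ to $\partial\region$, which the paper leaves implicit).
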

\begin{proof}
    According to Keldysh's theorem and the Cauchy integration formula, we know that  
    \begin{equation*}
        \cif(\rho) 
        = I-U_{\region}\Bigl(\frac{1}{2\pi\mi }\oint_{\partial \region}\frac{1}{\xi-\rho}(\xi I-J_{\region})^{-1}\md \xi\Bigr) W_{\region}^{\Htran}T(\rho)-H(\rho)T(\rho).
    \end{equation*}
    Note that the inverse of an $\ell$-by-$\ell$ Jordan block is  
    \begin{equation*}
        \begin{bmatrix}
            \xi-\lambda & -1 & &\\ 
            &\xi-\lambda &\ddots &\\ 
            &&\ddots&-1\\ 
            &&&\xi-\lambda
        \end{bmatrix}^{-1}
        =\begin{bmatrix}
            (\xi-\lambda)^{-1} & (\xi-\lambda)^{-2} &\cdots &(\xi-\lambda)^{-\ell}\\ 
            &(\xi-\lambda)^{-1} &\ddots &\vdots\\ 
            &&\ddots&(\xi-\lambda)^{-2}\\ 
            &&&(\xi-\lambda)^{-1}
        \end{bmatrix},
    \end{equation*}
    and the nonzero entries of $(\xi I-J_{\region})^{-1}/(\xi-\rho)$ take the form $\phi_{k}(\xi)=(\xi-\lambda)^{-k}(\xi-\rho)^{-1}$ for some $k\geq 1$. Since $\abs{\phi_{k}(\xi)}=\order(\abs{\xi}^{-2})$ as $\abs{\xi}\to\infty$, the residue theorem asserts that 
    \begin{equation*}
        \frac{1}{2\pi\mi }\oint_{\partial \region}\phi_{k}(\xi)\md \xi = 0
        \quad\text{and}\quad \frac{1}{2\pi\mi }\oint_{\partial \region}\frac{1}{\xi-\rho}(\xi I-J_{\region})^{-1}\md \xi = 0,
    \end{equation*}
    which implies that $\cif(\rho)=I-H(\rho)T(\rho)$. 
Then the proof is completed by using Keldysh's theorem again.
\end{proof}

Let $\lambda$ be an eigenvalue of $T(\cdot)$ in $\region$. Denote the canonical system of generalized eigenvectors of $T(\cdot)$ at $\lambda$ by $U_{\lambda}$, that is, the columns of $U_{\region}$ corresponding to diagonal entries $\lambda$ in $J_{\region}$.
Then \cref{thm:cif} yields that, for any $u_{\lambda}\in \range(U_{\lambda})$, it holds that 
\begin{equation}
  \label{eq:NLproj}
    \lim_{\rho\to\lambda} \cif(\rho)u_{\lambda}\in\range(U_{\lambda}).
\end{equation}

\section{Convergence of iterative nonlinear eigensolvers}

\label{sec:conv}
Although the contour integration can recover $\mathcal{U}_{\region}$ exactly, replacing $\cif(\rho)$ with its discretized counterpart $\itf(\rho)$ in general does \emph{not} yield an iterative method. In this section, we present a framework for a class of  iterative nonlinear eigensolvers based on contour integral and establish a corresponding convergence theory under mild assumptions.

\subsection{A framework for a class of iterative nonlinear eigensolvers}
This framework contains two ingredients in each iteration. 
The first step is an extraction. 
Given a low-dimensional subspace $\mathcal{X}\subset \C^{n}$, we first extract approximate eigenpairs $(\rho_{i},x_{i})$ of $T(\cdot)$ in $\region$, where $x_{i}\in\mathcal{X}$ for $i=1,\dotsc,m$. We hope to obtain approximations $(\rho_{i},x_{i})$ to all exact eigenpairs $(\lambda_{i},u_{i})$ of $T(\cdot)$ in $\region$.
The second step is filtering. We apply a filter $\itf(\rho_{i})$ to each $x_{i}$ as  
\begin{equation*}
    \widehat{x}_{i} = \itf(\rho_{i})x_{i}\quad\text{for}\quad 1\leq i\leq m.
\end{equation*}
Here $\itf(\rho_{i})$ is an approximation of $\cif(\rho_{i})$, obtained by, e.g.,  replacing the contour integral in $\cif(\rho_{i})$, defined in \cref{eq:defcif}, with a quadrature rule.
The whole procedure is summarized in \cref{algo:iree}.

\begin{algorithm}[htbp]
\caption{Iterative nonlinear eigensolvers}
\label{algo:iree}
\begin{algorithmic}[1]
\REQUIRE Matrix-valued functions $T(\cdot)$ and  $\itf$, an initial subspace $\mathcal{X}^{(0)}\subset \C^{n}$.
\ENSURE Approximate eigenpairs $(\rho_{i},x_{i})$ for $1\leq i\leq m$.
\FOR{\(k=0\), \(1\), \(\dotsc\)}
\STATE Extract approximate eigenpairs $(\rho_{i}^{(k)},x_{i}^{(k)})$ for $1\leq i\leq m$ from $\mathcal{X}^{(k)}$.
\COMMENT{Extraction step}
\label{alg-step:proj}
\FOR{\(i=1\), \(2\), \(\dotsc\), \(m\)}
\STATE Compute \(\widehat{x}_{i}^{(k+1)} = \itf(\rho_{i}^{(k)})x_{i}^{(k)}\). \COMMENT{Filtering step}
\label{alg-step:filter}
\ENDFOR
\STATE Form a new subspace $\mathcal{X}^{(k+1)}=\Span\{\widehat{x}_{1}^{(k+1)},\dotsc,\widehat{x}_{m}^{(k+1)}\}$.
\ENDFOR
\end{algorithmic}
\end{algorithm}

\cref{algo:iree} includes the original NLFEAST algorithm proposed in \cite[Algo.~1]{GMP2018} as a special case.
Specifically, let 
\begin{equation*}
    r(\xi) = \sum_{j=1}^{N}\frac{\omega_{j}}{\xi_{j}-\xi}
\end{equation*}
be a rational approximation of the indicator function $\chi_{\region}$ on $\region$. Taking
\begin{equation}
    \label{eq:defnlf}
    \itf(\rho)\equiv \nlf(\rho) \defi \sum_{j=1}^{N} \frac{\omega_{j}}{\xi_{j}-\rho}\Bigl(I-T(\xi_{j})^{-1}T(\rho)\Bigr)
\end{equation} 
in \cref{algo:iree}, and extracting approximate eigenpairs $(\rho_{i}^{(k)},x_{i}^{(k)})$ by a (nonlinear) Rayleigh--Ritz procedure, we obtain NLFEAST summarized in \cref{algo:NLFEAST}.
Here, the Rayleigh--Ritz procedure considers the compressed NEP $(X^{(k)})^{\Htran}T(\cdot)X^{(k)}$ for an orthonormal basis $X^{(k)}$ of $\mathcal{X}^{(k)}$. The Ritz values $\rho_{i}^{(k)}$ are obtained as the eigenvalues of this compressed NEP in $\region$, and the corresponding Ritz vectors are obtained from multiplying $X^{(k)}$ with the associated eigenvectors.

\begin{algorithm}[htbp]
\caption{NLFEAST \cite[Algo.~1]{GMP2018}}
\label{algo:NLFEAST}
\begin{algorithmic}[1]
\REQUIRE A matrix-valued function $T(\cdot)$, and an initial subspace $\mathcal{X}^{(0)}\subset \C^{n}$.
\ENSURE Ritz pairs $(\rho_{i},x_{i})$ for $1\leq i\leq m$.
\STATE Compute quadrature nodes and weights $(\xi_{j},\omega_{j})$ for $1\leq j\leq N$.
\FOR{\(k=0\), \(1\), \(\dotsc\)}
\STATE Extract Ritz pairs $(\rho_{i}^{(k)},x_{i}^{(k)})$ for $1\leq i\leq m$ in $\mathcal{X}^{(k)}$ by the Rayleigh--Ritz procedure.
\label{alg-step:proj_NLFEAST}

\FOR{\(i=1\), \(2\), \(\dotsc\), \(m\)}
\STATE Compute 
\[
\widehat{x}_{i}^{(k+1)} =
\sum_{j=1}^{N} \frac{\omega_{j}}{\xi_{j}-\rho_{i}^{(k)}}\Bigl(I-T(\xi_{j})^{-1}T(\rho_{i}^{(k)})\Bigr)x_{i}^{(k)}.
\]
\label{alg-step:filter_NLFEAST}
\ENDFOR
\STATE Form a new subspace $\mathcal{X}^{(k+1)}=\Span\{\widehat{x}_{1}^{(k+1)},\dotsc,\widehat{x}_{m}^{(k+1)}\}$.
\ENDFOR
\end{algorithmic}
\end{algorithm}

In \cref{sec:nlfeast_mod}, we also introduce some variants of NLFEAST that fit within the framework of \cref{algo:iree} and present some numerical results to illustrate their convergence.

\subsection{A necessary condition for convergence}
\label{sec:nece}

\cref{algo:iree} can be viewed as an iterative method on the subspace $\mathcal{X}^{(k)}$. Denote 
\begin{equation*}
    \mathcal{X}^{(k+1)} = \itfo(\mathcal{X}^{(k)}).
\end{equation*}
Then \cref{algo:iree} converges locally if $\itfo$ is a contraction mapping in a neighborhood of the fixed point $\mathcal{U}_{\region} \defi \range(U_{\region})$ on the Grassmann manifold.
Clearly, a necessary condition for $\mathcal{U}_{\region}$ being a fixed point is that, for any eigenvalue $\lambda$ of $T(\cdot)$ in $\region$, it holds that 
\begin{equation}
    \label{eq:neceCon}
    \lim_{\rho\to\lambda}\itf(\rho)u \in\mathcal{U}_{\region}
    \quad\text{for}\quad u\in\range(U_{\lambda}),
\end{equation}
where $U_{\lambda}$ is the canonical system of generalized eigenvectors of $T(\cdot)$ at $\lambda$ as in \cref{eq:NLproj}.
The condition \cref{eq:neceCon} is essentially a weaker variant of the continuous case in \cref{eq:NLproj}.

The necessary condition~\cref{eq:neceCon} explains the difficulty of converting certain nonlinear eigensolvers into iterative methods. We start with Beyn's method. Let
\begin{equation*}
    M_{1,N} = \sum_{j=1}^{N}\omega_{j}\xi_{j}T(\xi_{j})^{-1}
    \quad\text{and}\quad 
    M_{0,N} = \sum_{j=1}^{N}\omega_{j}T(\xi_{j})^{-1},
\end{equation*}
where the weights $\omega_{j} \in \C$ and nodes $\xi_{j}$ are chosen such that $M_{s,N} \approx M_{s}$, with $M_{s}$ defined in \cref{eq:defmon}.
As already discussed in \cref{sec:intro}, one could try to refine Beyn's method like in subspace iteration.
The fixed point of this fictitious iterative refinement corresponds to an invariant subspace of the matrix pencil $M_{1,N}-\xi M_{0,N}$, which varies with $N$. In contrast, for a generic NEP, the desired subspace $\mathcal{U}_{\region}$ is, in general, not an invariant subspace of $M_{1,N}-\xi M_{0,N}$ unless the numerical quadrature incurs no error.
Consequently, quadrature introduces an error into Beyn's method, as well as the (block) SS method for similar reasons, that cannot be mitigated by iterative refinement.

The next method we consider is a kind of discretization of $\cif(\rho)$ in \cref{eq:defcif} proposed in \cite[Eq.~(13)]{GMP2018}, which is already noted there to \emph{not} work: 
\begin{equation*}
    \widehat{x}_{i}^{(k+1)} = \sum_{j=1}^{N}\omega_{j}T(\xi_{j})^{-1}T^{\prime}(\rho_{i}^{(k)})x_{i}^{(k)} =: \itf( \rho_i^{(k)}) x_{i}^{(k)}.
\end{equation*}
To understand this from a theoretical point of view, we look into the asymptotic behavior of one simple eigenpair $(\lambda,u)$ as follows:
\begin{equation*}
\lim_{\rho\to\lambda}  \itf(\rho) u =    \lim_{\rho\to\lambda} \sum_{j=1}^{N}\omega_{j}T(\xi_{j})^{-1}T^{\prime}(\rho)u = \sum_{j=1}^{N}\omega_{j}T(\xi_{j})^{-1}T^{\prime}(\lambda)u=: F_{N}T^{\prime}(\lambda)u.
\end{equation*}
Note that $T^{\prime}(\lambda)u\in \C^{n}$ is typically a nonzero vector independent of $N$.
For a generic NEP, the matrix $F_{N}$ varies with $N$, making it unlikely that $F_{N}T^{\prime}(\lambda)u\in\mathcal{U}_{\region}$ holds for some prescribed~$N$.
Hence, $\mathcal{U}_{\region}$ is unlikely to be a fixed point of $\itfo$, which implies that the scheme above is not an iterative method.

\subsection{A sufficient condition for linear convergence}
\label{sub-sec:suffcond}

In this section, we derive one of the main results of this work,~\cref{thm:main}, which establishes local linear convergence provided that a sufficiently good filter and extraction mechanism is used. For this purpose,  we will first introduce~\cref{asp:RR} on extraction and link it to a suitable notion of eigenpair accuracy. As our result will be asymptotic with respect to filter quality, we will need to consider a whole filter family that contains increasingly good filters.

Throughout this section, we make the following assumption on $T(\cdot)$ for the sake of simplicity in our analysis. 
\begin{assumption}
    \label{asp:T}
Assume that the eigenvalues $\lambda_{1},\dotsc,\lambda_{m}$ in $\region$ are simple, and their corresponding (unit) eigenvectors $u_{1},\dotsc,u_{m}$ are linearly independent.
\end{assumption}

Under \cref{asp:T}, we know that the number of eigenvalues in $\region$ satisfies $m\leq n$. 
Moreover, since all eigenvalues are simple, then the eigenvectors of $T(\cdot)$ are unique up to scaling. If we choose the right and left eigenvectors in accordance with 
Keldysh's theorem (that is, the eigenvectors are columns of the matrices $U_\region$ and $W_\region$ in~\cref{thm:Keldysh}) then the residue theorem yields that  
\begin{equation*}
    u_{i}w_{i}^{\Htran} = \frac{1}{2\pi\mi}\oint_{\mathcal C_{i}}T(\xi)^{-1}\md \xi = \frac{1}{w_{i}^{\Htran}T^{\prime}(\lambda_{i})u_{i}}u_{i}w_{i}^{\Htran} \quad\Rightarrow \quad w_{i}^{\Htran}T^{\prime}(\lambda_{i})u_{i}=1,
\end{equation*}
where $\mathcal C_{i}$ is a small contour that only encloses one eigenvalue $\lambda_{i}$.
\begin{remark}

\Cref{asp:T} excludes certain challenging scenarios. Nevertheless, with a careful implementation, \cref{algo:iree} can still be applied and achieves linear convergence even when \cref{asp:T} is violated; see \cref{subsec:numexpSameEvec} for a numerical demonstration.
From a theoretical standpoint, \cref{asp:T} could be relaxed by invoking the concept of invariant pairs \cite{Kressner2009}.
\end{remark}

\subsubsection{Assumption on eigenpair extraction}
To establish a convergence result, we also need the following assumption on the quality of approximate eigenpairs extracted in line~\ref{alg-step:proj} of \cref{algo:iree}.
\begin{assumption}
    \label{asp:RR}
    Given  \cref{asp:T}, we assume that there exists a sufficiently small constant $0<\epsilon_{T}<\pi/2$ depending only on $T(\cdot) \colon \region \mapsto \C^{n\times n}$, such that the following holds. For any $m$-dimensional subspace $\mathcal{X}\subset\C^{n}$ satisfying $\epsilon\defi \angle(\mathcal{X},\mathcal{U}_{\region})\leq \epsilon_{T}$ for $\mathcal{U}_{\region}=\Span\{u_{1},\dotsc,u_{m}\}$, the eigenpairs $(\rho_{i},x_{i})$ extracted from $\mathcal{X}$ in line~\ref{alg-step:proj} of \cref{algo:iree} are $\order(\epsilon)$-approximations  to the (appropriately ordered) exact eigenpairs $(\lambda_{i},u_{i})$ of $T(\cdot)$ in $\region$:
    \begin{equation*}
        \abs{\rho_{i}-\lambda_{i}}=  \order(\epsilon)\quad\text{and}\quad \angle(x_{i},u_{i})=   \order(\epsilon), \quad i = 1,\ldots, m.
    \end{equation*}
Here, the constants in $\order(\cdot)$ are independent of the choice of $\mathcal{X}$.
\end{assumption} 

\cref{asp:RR} essentially says that, once the subspace $\mathcal{X}^{(k)}$ is a good approximation to $\mathcal{U}_{\region}$, then we can extract equally good approximate eigenpairs from it. In particular, for the exact $\mathcal{U}_{\region}$, the eigenvalues and eigenvectors are recovered exactly by the extraction. 

\begin{remark}
For standard eigenvalue problems, the standard Rayleigh--Ritz procedure
satisfies \cref{asp:RR}.
For NEPs, the nonlinear Rayleigh--Ritz procedure used in \cref{algo:NLFEAST} is \emph{usually} observed to satisfy \cref{asp:RR}, but it may fail for certain $T(\cdot)$'s because the compressed NEP may be $\epsilon$-close to singular; see \cite[Ex.~3]{shao2026stabilizing} for example.
The \emph{randomized Rayleigh--Ritz} procedure recently introduced in \cite{shao2026stabilizing} satisfies the convergence guarantee required in \cref{asp:RR} with high probability.
\end{remark}

\subsubsection{Decomposition of approximate eigenvectors}
Recalling $U_{\region}=[u_{1},\dotsc,u_{m}]$, we denote $U_{\out}$ as an orthonormal basis of the orthogonal complement of $\mathcal{U}_{\region} = \range(U_{\region})$ and $U=[U_{\region},U_{\out}]\in\C^{n\times n}$.
Let us consider an arbitrary $m$-dimensional subspace $\mathcal{X}\subset \C^{n}$ in a sufficiently small neighborhood of $\mathcal{U}_{\region}$ on the Grassmann manifold. Let $\{(\rho_{i},x_{i})\}_{i=1}^{m}$ be the approximate eigenpairs of $T(\cdot)$ extracted from $\mathcal{X}$ in line~\ref{alg-step:proj} of \cref{algo:iree}. Without loss of generality, we assume that~$x_{i}$ is normalized by $e_{i}^{\Htran}U^{-1}x_{i}=1$, and hence, it admits the following decomposition: 
\begin{equation}
    \label{eq:decompX}
    x_{i} = u_{i}+U_{-i}y_{i}+U_{\out}z_{i}\quad\text{for}\quad i=1,\dotsc,m,
\end{equation}
where $U_{-i}=[u_{1},\dotsc,u_{i-1},u_{i+1},\dotsc,u_{m}]$.
Even though $U$ is not an orthogonal matrix, it is invertible and fixed with respect to $i$; hence, $\norm{y_{i}}$ and $\norm{z_{i}}$ are still indicative of how closely $x_{i}$ approximates $u_{i}$.
The following proposition shows that $\angle(\mathcal{X},\mathcal{U}_{\region})$ is a good measurement for the accuracy of all approximate eigenpairs. 
\begin{proposition}
\label{prop:eps}
    Let $\epsilon\defi \angle(\mathcal{X},\mathcal{U}_{\region})\leq \epsilon_{T}$. Consider the extracted eigenpairs $(\rho_{i},x_{i})$ of $T(\cdot)$ from the subspace $\mathcal{X}$, where $x_{i}$ admits the decomposition \cref{eq:decompX}.
    Under \cref{asp:RR}, it holds that 
    \begin{equation*}
        \max_{1\leq i\leq m} \max\Bigl\{\abs{\rho_{i}-\lambda_{i}},\norm{y_{i}},\norm{z_{i}}\Bigr\} = \order(\epsilon).  
    \end{equation*} 
\end{proposition}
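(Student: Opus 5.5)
The eigenvalue part, $\abs{\rho_i-\lambda_i}=\order(\epsilon)$, is given directly by \cref{asp:RR}. The remaining task is to turn the angle bound $\angle(x_i,u_i)=\order(\epsilon)$ into bounds on the coefficients $y_i$ and $z_i$ of the decomposition \cref{eq:decompX}. Here $x_i$ is normalized by $e_i^{\Htran}U^{-1}x_i=1$ rather than to unit length. The plan has two steps: first relate $x_i$ to a unit-length representative, then control the normalization factor.

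First, write $\hat x_i = x_i/\norm{x_i}$. Since $\angle(x_i,u_i)=\theta_i=\order(\epsilon)$ and $u_i$ is a unit vector, there is a unimodular scalar $\alpha_i$ with $\norm{\hat x_i-\alpha_i u_i}\le 2\sin(\theta_i/2)\le \theta_i$. For instance, take $\alpha_i$ to align the phase of $u_i^{\Htran}\hat x_i$; then $\norm{\hat x_i-\alpha_i u_i}^2 = 2-2\abs{u_i^{\Htran}\hat x_i} = 2-2\cos\theta_i$.

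Next, apply $U^{-1}$. Because $U=[U_{\region},U_{\out}]$ is a fixed invertible matrix that depends only on $T(\cdot)$, $\norm{U^{-1}}$ is a constant absorbed into $\order(\cdot)$. We have
\begin{equation*}
U^{-1}\hat x_i = \alpha_i e_i + \delta_i,
\qquad \norm{\delta_i}\le \norm{U^{-1}}\theta_i=\order(\epsilon).
\end{equation*}
The $i$th component is therefore $\beta_i \defi e_i^{\Htran}U^{-1}\hat x_i = \alpha_i + e_i^{\Htran}\delta_i$. Once $\epsilon\le\epsilon_T$ is small enough that $\norm{U^{-1}}\theta_i\le 1/2$, this gives $\abs{\beta_i}\ge 1/2$. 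Consequently $x_i=\hat x_i/\beta_i$ is well defined and matches the normalization in \cref{eq:decompX}.

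Then
\begin{equation*}
U^{-1}x_i = e_i + \frac{\delta_i - (e_i^{\Htran}\delta_i)e_i}{\beta_i}.
\end{equation*}
The vector $[y_i;z_i]$ consists precisely of the non-$i$ components of $U^{-1}x_i$, so
\begin{equation*}
\norm{y_i},\norm{z_i}\le 2\norm{\delta_i}=\order(\epsilon).
\end{equation*}

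The constants are uniform in $i$ and in $\mathcal{X}$, since they depend only on $\norm{U^{-1}}$ and on the constants in \cref{asp:RR}. Taking the maximum over the finitely many indices $i=1,\dotsc,m$ therefore gives the claim.

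The only delicate point is the lower bound on $\abs{\beta_i}$, which ensures that the normalization $e_i^{\Htran}U^{-1}x_i=1$ is possible and does not inflate the bounds. This is where smallness of $\epsilon_T$ relative to $\norm{U^{-1}}^{-1}$ is used. If necessary, one shrinks $\epsilon_T$, which is allowed because it depends only on $T(\cdot)$.
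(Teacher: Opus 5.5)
Your proof is correct and takes the same route as the paper. The eigenvalue bound is read off from \cref{asp:RR}, and the bounds on $y_i,z_i$ come from transferring $\angle(x_i,u_i)=\order(\epsilon)$ through the fixed invertible $U$, which the paper compresses into $\norm{y_i}^2+\norm{z_i}^2=\tan^2\angle(U^{-1}x_i,e_i)=\order(\tan^2\angle(x_i,u_i))$; your explicit lower bound $\abs{\beta_i}\ge 1/2$ spells out the smallness of $\epsilon_T$ that the paper's tangent estimate tacitly requires.
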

\begin{proof}
    The property $\abs{\rho_{i}-\lambda_{i}}=\order(\epsilon)$ follows directly from the convergence of Ritz values in \cref{asp:RR}. The remaining terms follow from the convergence of Ritz vectors:
    \begin{equation*}
        \norm{z_{i}}^{2}+\norm{y_{i}}^{2} = \tan^{2}\angle(U^{-1}x_{i},e_{i}) = \order\bigl( \tan^{2}\angle(x_{i},u_{i})\bigr)  =\order(\epsilon^{2}),
    \end{equation*} 
    where we used that $U$ is invertible and a constant with respect to $\epsilon$.
\end{proof}

\subsubsection{Effectiveness of a filter family} \label{sec:filterfamily}

We now consider a family of filters $\itf\equiv\itf^{[N]}$ parametrized by a positive integer $N$. 
Let $\widehat{x}_{i}\defi\itf^{[N]}(\rho_{i})x_{i}$, and consider a decomposition analogous to~\eqref{eq:decompX}:
\begin{equation}
        \label{eq:decompXh}
    \widehat{x}_{i} = \widehat{\alpha}_{i}u_{i}+U_{-i}\widehat{y}_{i}+U_{\out}\widehat{z}_{i}.
\end{equation}    
While the variables $\widehat{\alpha}_{i}$, $\widehat{y}_{i}$ and $\widehat{z}_{i}$ of course depend on $N$, we omit the superscript $\cdot^{[N]}$ for notational clarity.

We require the filter family to be effective in the sense that components in $\mathcal{U}_{\out}$ are reduced by a factor in every step, while components in $\mathcal{U}_{\region}$ are nearly preserved.
Specifically, we assume that 
\begin{equation}
    \label{eq:aspfilter}
    \bigabs{\widehat{\alpha}_{i}-1} = \order(\gamma_{N}+\epsilon),\quad \norm{\widehat{y}_{i}}= \order(\gamma_{N}+\epsilon)\quad\text{and}\quad  \norm{\widehat{z}_{i}}= \order(\gamma_{N}\epsilon), \quad i = 1,\ldots,m,
\end{equation}
where $\epsilon = \angle(\mathcal{X},\mathcal{U}_{\region})$ as before. 
The quantity $\gamma_N \ge 0$ quantifies the quality of the filter family and we assume that $\gamma_{N} \stackrel{N\to \infty}{\longrightarrow} 0$.
In~\cref{sub-sec:NLFEAST}, we show that the filter~\cref{eq:defnlf} used in NLFEAST defines such a family; it satisfies \cref{eq:aspfilter} with the parameter $N$ representing the number of quadrature nodes.

\subsubsection{Linear convergence of \cref{algo:iree}}

We are now ready to formulate our first main result on the convergence of \cref{algo:iree}.
\begin{theorem}
    \label{thm:main}
    Let $T(\cdot)$ be an NEP satisfying \cref{asp:T} for a domain $\region$. Using the notation of \cref{asp:RR}, consider an $m$-dimensional subspace  $\mathcal{X}$ such that $\epsilon\defi \angle(\mathcal{X},\mathcal{U}_{\region})\leq \epsilon_{T}$, and let $(\rho_{i},x_{i})$, $i = 1,\ldots,m$, denote approximate eigenpairs of $T(\cdot)$ extracted from $\mathcal{X}$ satisfying \cref{asp:RR}. 
    Assume that there exists a family of filters $\itf^{[N]}(\cdot)$ such that $\widehat{x}_{i}\defi\itf^{[N]}(\rho_{i})x_{i}$ satisfies \cref{eq:aspfilter} with $\gamma_{N} \stackrel{N\to \infty}{\longrightarrow} 0$.
    Then the updated subspace $\widehat{\mathcal{X}} = \Span\{\widehat{x}_{1},\dotsc,\widehat{x}_{m}\}$ satisfies
    \begin{equation*}
        \widehat{\epsilon}\defi \angle(\widehat{\mathcal{X}},\mathcal{U}_{\region})=\order(\gamma_{N}\epsilon).
    \end{equation*}
    Here, the constants in $\order(\cdot)$ are independent of the choice of $\mathcal{X}$.
\end{theorem}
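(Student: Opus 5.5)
Write the filtered vectors in block form. Stack the decompositions \cref{eq:decompXh} and reorder the coordinates of $U^{-1}\widehat{x}_{i}$. This gives
\begin{equation*}
    \widehat{X}\defi[\widehat{x}_{1},\dotsc,\widehat{x}_{m}] = U_{\region}A + U_{\out}Z,
\end{equation*}
where $A\in\C^{m\times m}$ has diagonal entries $\widehat{\alpha}_{i}$. Its off-diagonal entries in column $i$ are the components of $\widehat{y}_{i}$, placed according to $U_{-i}$. The matrix $Z=[\widehat{z}_{1},\dotsc,\widehat{z}_{m}]$ collects the out-of-domain components. By \cref{eq:aspfilter},
\begin{equation*}
    \norm{A-I}=\order(\gamma_{N}+\epsilon)
    \quad\text{and}\quad
    \norm{Z}=\order(\gamma_{N}\epsilon),
\end{equation*}
with constants independent of $\mathcal{X}$, since $m$ is fixed.

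Since $\gamma_{N}\to 0$ and $\epsilon\le\epsilon_{T}$ is small, we may assume $\norm{A-I}\le 1/2$. Then $A$ is invertible with $\norm{A^{-1}}\le 2$. This is where the smallness assumptions enter. For any fixed $\delta>0$ it holds once $N$ is large enough and $\epsilon_T$ is small enough; for the finitely many remaining $N$ the claimed bound is trivial up to constants, because the angle is at most $\pi/2$. In particular $\widehat{X}$ has full column rank, so $\widehat{\mathcal{X}}$ is $m$-dimensional. Right-multiplying by $A^{-1}$ shows that $\widehat{\mathcal{X}}=\range(U_{\region}+U_{\out}ZA^{-1})$.

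Now bound the angle. Let $Q$ be an orthonormal basis of $\mathcal{U}_{\out}$, so $U_{\out}=Q$, and let $P_{\out}=QQ^{\Htran}$ be the orthogonal projector onto $\mathcal{U}_{\region}^{\perp}$. Then
\begin{equation*}
    \sin\angle(\widehat{\mathcal{X}},\mathcal{U}_{\region})=\max_{v\in\widehat{\mathcal{X}},\,\norm{v}=1}\norm{P_{\out}v}.
\end{equation*}
Write $v=(U_{\region}+QZA^{-1})c$. On the one hand, $\norm{P_{\out}v}=\norm{ZA^{-1}c}\le 2\norm{Z}\norm{c}$. On the other hand,
\begin{equation*}
    \norm{v}\ge\norm{U_{\region}c}\ge\sigma_{\min}(U_{\region})\norm{c}.
\end{equation*}
Here $\sigma_{\min}(U_{\region})>0$ depends only on $T(\cdot)$, by the linear independence in \cref{asp:T}. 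Hence
\begin{equation*}
    \sin\widehat{\epsilon}\le \frac{2\norm{Z}}{\sigma_{\min}(U_{\region})}=\order(\gamma_{N}\epsilon).
\end{equation*}
Since $\widehat{\epsilon}\le\pi/2$, we have $\widehat{\epsilon}\le(\pi/2)\sin\widehat{\epsilon}$, which gives $\widehat{\epsilon}=\order(\gamma_{N}\epsilon)$.

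The main obstacle is the invertibility of $A$ uniformly in $\mathcal{X}$. The bounds in \cref{eq:aspfilter} only make $A$ close to $I$ up to $\order(\gamma_{N}+\epsilon)$, so we must take $\epsilon_{T}$ small and $N$ large, or absorb the bounded regime into the constant as described above. A secondary point is that $U_{\region}$ is not orthonormal. This is handled through $\sigma_{\min}(U_{\region})$, which depends only on $T$.
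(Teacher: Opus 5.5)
Your main argument is correct and is essentially the paper's. Your $A$ and $Z$ are the two block rows $U_{\region}^{\dagger}\widehat{X}$ and $U_{\out}^{\dagger}\widehat{X}$ of $U^{-1}\widehat{X}$. In both proofs the key step is that $\norm{A-I}=\order(\gamma_{N}+\epsilon)$ forces $\norm{A^{-1}}=\order(1)$.

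Only the last step differs. The paper computes $\tan\angle(U^{-1}\widehat{X},E_{m})=\norm{ZA^{-1}}$ via the Zhu--Knyazev formula in transformed coordinates. It then asserts, without proof, that passing back through the fixed invertible $U$ costs only a constant. You instead bound $\sin\widehat{\epsilon}$ directly in the original coordinates, using the orthogonal splitting $v=U_{\region}c+U_{\out}ZA^{-1}c$ and $\sigma_{\min}(U_{\region})$. Your version is self-contained and makes that hidden constant explicit, so it is if anything slightly more complete.

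One sentence should be removed, however. For the finitely many $N$ with $\gamma_{N}$ not small, the bound is not ``trivial up to constants.'' The target $C\gamma_{N}\epsilon$ tends to $0$ with $\epsilon$, so $\widehat{\epsilon}\le\pi/2$ gives nothing. Worse, in that regime \cref{eq:aspfilter} permits $\widehat{\alpha}_{1}=0$ and $\widehat{y}_{1}=0$, i.e.\ $\widehat{x}_{1}=U_{\out}\widehat{z}_{1}\perp\mathcal{U}_{\region}$. Together with, say, $\widehat{x}_{i}=u_{i}$ for $i\ge2$, this gives $\widehat{\epsilon}=\pi/2$ for every $\epsilon>0$.

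So the theorem genuinely needs $\gamma_{N}$ and $\epsilon_{T}$ below a threshold depending on $T$ and the constants in \cref{eq:aspfilter}. The paper imposes this tacitly when it concludes $\norm{(U_{\region}^{\dagger}\widehat{X})^{-1}}=\order(1)$, and explicitly in \cref{cor:NLFEAST} through $\gamma_{N}<1/\cnl$. State that restriction in place of your fallback, and drop the unused $\delta$.
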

\cref{thm:main} shows---under the stated assumptions---that one step of \cref{algo:iree} effects a linear contraction of $\angle(\mathcal{X},\mathcal{U}_{\region})$.
By applying \cref{thm:main} recursively, it follows that, for sufficiently large $N$, \cref{algo:iree} exhibits local linear convergence with a rate proportional to $\gamma_{N}$.

\begin{proof}[Proof of \cref{thm:main}]
    Let $X=[x_{1},\dotsc,x_{m}]$ and  $\widehat{X}=[\widehat{x}_{1},\dotsc,\widehat{x}_{m}]$. Since 
    \begin{equation*}
        \widehat{\epsilon} = \angle(\widehat{X},U_{\region}) = \order\bigl(\angle(U^{-1}\widehat{X},E_{m})\bigr)=\order\bigl(\tan\angle(U^{-1}\widehat{X},E_{m})\bigr),
    \end{equation*}
    where $E_{m}=[e_{1},\dotsc,e_{m}]$.
    To establish a linear convergence, it suffices to show 
    \begin{equation}
        \label{eq:tanleft}
        \tan\angle\bigl(U^{-1}\widehat{X},E_{m}\bigr) = \order(\gamma_{N}\epsilon).
    \end{equation}
    Let $\widehat{Y}= [\widehat{y}_{1},\dotsc,\widehat{y}_{m}]$ and $\widehat{Z}= [\widehat{z}_{1},\dotsc,\widehat{z}_{m}]$.
    According to \cref{eq:aspfilter} and the triangle inequality, we have
    \begin{equation*}
            \norm{U_{\out}^{\dagger}\widehat{X}}=\norm{\widehat{Z}}=\order(\gamma_{N}\epsilon)
            \quad\text{and}\quad 
            \norm{U_{\region}^{\dagger}\widehat{X}-I} \leq  \norm{\widehat{Y}}+\max_{1\leq i\leq m}\abs{\widehat{\alpha}_{i}-1} =  \order(\gamma_{N}+\epsilon).
    \end{equation*}
    As a consequence, we know that $\norm{(U_{\region}^{\dagger}\widehat{X})^{-1}}=\order(1)$.
    Then \cref{eq:tanleft} is obtained by a representation of the tangent of principal angles in \cite[Thm.~3.1]{Zhu2013} and the submultiplicativity of the spectral norm:
    \begin{equation*}
        \tan\angle\bigl(U^{-1}\widehat{X},E_{m}\bigr) 
        =\norm{U_{\out}^{\dagger}\widehat{X}(U_{\region}^{\dagger}\widehat{X})^{-1}} \leq \norm{U_{\out}^{\dagger}\widehat{X}}\norm{(U_{\region}^{\dagger}\widehat{X})^{-1}} = \order(\gamma_{N} \epsilon),
    \end{equation*}
    and in turn, the theorem is proved.
\end{proof}

As we have already seen in \cref{sec:intro} and \cref{sec:nece}, some contour integral discretizations do not combine well with 
the iterative refinement performed by~\cref{algo:iree}.
To gain further theoretical insight, let us consider the convergence to an individual eigenpair, w.l.o.g., the first eigenpair $(\lambda_{1},u_{1})$. For the filter family $\itf^{[N]}$ to satisfy \cref{eq:aspfilter} we essentially need to ensure that
\begin{equation}
     \label{eq:condblablab}
    \itf^{[N]}(\rho_{1}) = U\begin{bmatrix}
        1+\order(\gamma_{N}+\epsilon) & \order(1) & \order(1)\\ 
        \order(\gamma_{N}+\epsilon) & \order(1) & \order(1)\\ 
        \order(\gamma_{N}\epsilon) &\order(\gamma_{N}+\epsilon) &\order(\gamma_{N}+\epsilon)
    \end{bmatrix}U^{-1}
\end{equation}
holds, where the block partitioning is according to \cref{eq:decompX} and we recall from~\cref{prop:eps} that $\abs{\rho_{1}-\lambda_{1}}=\order(\epsilon)$.
The most challenging, but nevertheless important part is to ensure the asymptotics for the $(3,1)$ block.
Given the contour integral-based operator $\cif$, \cref{thm:cif} yields 
\begin{equation*} 
\lim_{\rho_{1}\to\lambda_{1}}\cif(\rho_{1}) = U\begin{bmatrix}
        1 & \order(1) & \order(1)\\ 
        0 & \order(1) & \order(1)\\ 
        0 & 0 & 0 
    \end{bmatrix}U^{-1}. 
\end{equation*}
Both discretizing the contour integration and changing the eigenvalue $\lambda_{1}$ to an approximate eigenvalue $\rho_{1}$ will perturb the  eigenvectors of $\cif(\lambda_{1})$. 
Unless particular attention is paid, as is done in NLFEAST, this can be expected to introduce 
an error proportional to $\gamma_{N}+\abs{\rho_{1}-\lambda_{1}}$ into the $(3,1)$ block of $\itf^{[N]}(\rho_{1})$. As a consequence,
condition~\eqref{eq:condblablab} is not satisfied.
Also, $\lim_{\rho_{1}\to\lambda_{1}}\itf^{[N]}(\rho_{1}) u_{1}$ is unlikely to be contained in $\mathcal{U}_{\region}$
and hence, $\mathcal{U}_{\region}$ is \emph{not} the fixed point of the operator $\lim_{\rho_{1}\to\lambda_{1}}\itfo^{[N]}(\rho_{1})$.
Thus, some contour integral-based nonlinear iterative eigensolvers fail to improve their accuracy via performing iterations.

\subsection{Linear convergence of NLFEAST}
\label{sub-sec:NLFEAST}

In this section, we aim at showing that the filter family $\nlf$, defined in~\cref{eq:defnlf} and used in NLFEAST, is effective in the sense of \cref{eq:aspfilter}.
Since $\cif(\rho)$ is a bounded operator and the integrand is holomorphic on some domain that contains the closure of $\region$, we can assume that our numerical quadrature converges exponentially \cite[Thm.~19.3]{Trefethen2013}, that is, there exists a $\gamma_{N}= C_T \exp(-c_{T}N)$, where $c_T,C_T>0$ depend only on $T\colon \region\mapsto \C^{n\times n}$, such that
\begin{equation}
    \label{eq:quadrature}
    \max\Bigl\{\Bignorm{\sum_{j=1}^{N}\frac{\omega_{j}}{\xi_{j}-\rho}H(\xi_{j}) - H(\rho)},
    \bignorm{\cif(\rho)-\nlf(\rho)}
    \Bigr\} \leq \gamma_{N} \quad\text{for all}\quad \rho\in \region_{0},
\end{equation}
where $H(\cdot)$ is the holomorphic matrix-valued function in \cref{thm:Keldysh}
and $\region_{0} \subset \region$ is a compact set containing small neighborhoods of the desired eigenvalues $\lambda_1,\ldots,\lambda_m$. In the following theorem, we show that \cref{eq:quadrature} suffices to ensure the effectiveness of the filter family in NLFEAST.
\begin{theorem}
    \label{lem:nl}
    Let $T(\cdot)$ be an NEP satisfying \cref{asp:T}.
    Consider an arbitrary approximate eigenpair $(\rho_{i},x_{i})$ and let $\widehat{x}_{i}\defi \nlf(\rho_{i})x_{i}$, where $x_{i}$ and $\widehat{x}_{i}$ admit the decomposition \cref{eq:decompX,eq:decompXh}, respectively, and $1\leq i\leq m$.
    Assume that the accuracy of quadrature in $\nlf(\cdot)$ satisfies \cref{eq:quadrature}, and 
    \begin{equation*}
        \epsilon \defi \max_{1\leq i\leq m} \max \Bigl\{\abs{\rho_{i}-\lambda_{i}},\norm{y_{i}},\norm{z_{i}}\Bigr\}
    \end{equation*}
    is sufficiently small, then the effectiveness condition \cref{eq:aspfilter} holds for all $(\rho_{i},x_{i})$.
\end{theorem}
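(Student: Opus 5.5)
The plan is to write $\nlf(\rho_i) = \cif(\rho_i) + E_N(\rho_i)$ and to treat the exact part and the quadrature error separately. The main difficulty is the $z$-component: a naive bound only gives $\norm{E_N(\rho_i)}\le\gamma_N$, which yields $\norm{\widehat z_i}=\order(\gamma_N)$ instead of $\order(\gamma_N\epsilon)$. So the error must be shown to nearly annihilate $u_i$ as well.

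For the exact part, \cref{thm:cif} gives, for $\rho_i\ne\lambda_i$,
\[
\cif(\rho_i)=U_{\region}(\rho_i I-J_{\region})^{-1}W_{\region}^{\Htran}T(\rho_i),
\]
whose range lies in $\mathcal{U}_{\region}$. Hence $\cif(\rho_i)x_i$ has zero $U_{\out}$ component.

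Next I compute $\cif(\rho_i)u_i$. Since $T(\lambda_i)u_i=0$, we have $T(\rho_i)u_i=(\rho_i-\lambda_i)T'(\lambda_i)u_i+\order(\abs{\rho_i-\lambda_i}^2)$. The $i$th coordinate is then
\[
(\rho_i-\lambda_i)^{-1}w_i^{\Htran}T(\rho_i)u_i=w_i^{\Htran}T'(\lambda_i)u_i+\order(\epsilon)=1+\order(\epsilon),
\]
using the normalization $w_i^{\Htran}T'(\lambda_i)u_i=1$. For $j\ne i$, the coordinates are $(\rho_i-\lambda_j)^{-1}w_j^{\Htran}T(\rho_i)u_i=\order(\epsilon)$, because $\abs{\rho_i-\lambda_j}$ is bounded below. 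A cleaner route is the identity $\cif(\rho)=I-H(\rho)T(\rho)$ from \cref{thm:cif}. It gives $\cif(\rho_i)u_i=u_i-H(\rho_i)T(\rho_i)u_i=u_i+\order(\epsilon)$ directly, with no $\rho_i=\lambda_i$ singularity. For the remaining terms $U_{-i}y_i+U_{\out}z_i$, I use that $\cif(\rho)=I-H(\rho)T(\rho)$ is uniformly bounded on $\region_0$. Their images are therefore $\order(\epsilon)$ and lie in $\mathcal{U}_{\region}$. Altogether this gives $\abs{\widehat\alpha_i-1}=\order(\epsilon)$ and $\norm{\widehat y_i}=\order(\epsilon)$ from the exact part, with zero $\widehat z_i$ contribution.

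For the error part, I use the form
\[
E_N(\rho)=\nlf(\rho)-\cif(\rho)=\Bigl(\sum_j\tfrac{\omega_j}{\xi_j-\rho}-\chi\Bigr)I-\Bigl(\sum_j\tfrac{\omega_j}{\xi_j-\rho}T(\xi_j)^{-1}-\tfrac{1}{2\pi\mi}\oint\tfrac{T(\xi)^{-1}}{\xi-\rho}\md\xi\Bigr)T(\rho).
\]
This exhibits $E_N(\rho)$ as the difference of two terms. Its action on $u_i$ is the key point. I split $T(\xi)^{-1}$ via Keldysh into its pole part and $H$. The $H$-part error is bounded by $\gamma_N\norm{T(\rho_i)u_i}=\order(\gamma_N\epsilon)$ by \cref{eq:quadrature}, since $T(\rho_i)u_i=\order(\epsilon)$. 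The pole part $U_{\region}(\cdot)W_{\region}^{\Htran}$ has range in $\mathcal{U}_{\region}$, so it contributes nothing to $\widehat z_i$. The scalar term $(\sum_j\omega_j/(\xi_j-\rho_i)-1)u_i$ is $\order(\gamma_N)$ times $u_i$, again inside $\mathcal{U}_{\region}$. Hence the $U_{\out}$ part of $E_N(\rho_i)u_i$ is $\order(\gamma_N\epsilon)$.

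The remaining pieces are $U_{-i}y_i+U_{\out}z_i$, both of norm $\order(\epsilon)$. Their images under $E_N$ are bounded by $\gamma_N\epsilon$, which is within the allowed bound. All in-$\mathcal{U}_{\region}$ contributions of $E_N$ are $\order(\gamma_N)$, giving $\abs{\widehat\alpha_i-1},\norm{\widehat y_i}=\order(\gamma_N+\epsilon)$. Converting these norms to coordinates in the fixed basis $U$ costs only constants.

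I expect the main obstacle to be making the pole-part quadrature error rigorous. It must stay in $\mathcal{U}_{\region}$ and be bounded despite poles near $\rho_i$. I would handle this with the explicit residue sums for $\sum_j\omega_j(\xi_j-\rho)^{-1}(\xi_j-\lambda_k)^{-1}$, which are bounded since the nodes stay away from $\region_0$.
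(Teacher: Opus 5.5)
Your proposal is correct and follows the paper's proof: the crucial $\widehat z_i$ bound is obtained exactly as there, since $U_{\out}^{\dagger}\cif(\rho_i)=0$, the scalar and Keldysh pole parts of $\nlf(\rho_i)-\cif(\rho_i)$ applied to $u_i$ lie in $\mathcal{U}_{\region}$, and only the $H$-quadrature error acting on $T(\rho_i)u_i=\order(\epsilon)$ survives, which gives $\order(\gamma_N\epsilon)$. Using $\cif(\rho)=I-H(\rho)T(\rho)$ for $\widehat\alpha_i$ and $\widehat y_i$ is a mild simplification over the paper's pole form: it needs no case split at $\rho_i=\lambda_i$, no normalization $w_i^{\Htran}T'(\lambda_i)u_i=1$, and no lower bound on $\abs{\rho_i-\lambda_k}$. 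Also, the obstacle you anticipate does not arise, because the pole part is left-multiplied by $U_{\region}$ and its size never enters once $\norm{E_N(\rho_i)}\le\gamma_N$ controls the in-$\mathcal{U}_{\region}$ components.
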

\begin{proof}
    Since all eigenvalues of $T(\cdot)$ in $\region$ are simple, $\rho_{i}\neq \lambda_{j}$ for $i\neq j$ whenever $\epsilon$ is sufficiently small. In the following, we will treat two different situations: $\rho_{i}=\lambda_{i}$ or $\rho_{i}$ is not an eigenvalue of $T(\cdot)$.

    Let $[v_{1},\dotsc,v_{n}]=U^{-\Htran}$ and $V_{-i}=[v_{1},\dotsc,v_{i-1},v_{i+1},\dotsc,v_{m}]$. 
    Using \cref{eq:quadrature}, we know that 
    \begin{equation*}
        \abs{\widehat{\alpha}_{i}-1} = \bigabs{v_{i}^{\Htran}\nlf(\rho_{i})x_{i}-1} =  \bigabs{v_{i}^{\Htran}\cif(\rho_{i})x_{i}-1}+\order(\gamma_{N}) = \bigabs{v_{i}^{\Htran}\cif(\rho_{i})u_{i}-1}+\order(\gamma_{N}+\epsilon). 
    \end{equation*}
    When $\rho_{i}=\lambda_{i}$, we have $T(\rho_{i})u_{i}=0$. As a consequence, 
    \begin{equation*}
        v_{i}^{\Htran}\cif(\rho_{i})u_{i} = \frac{1}{2\pi\mi }\oint_{\partial \region}\frac{1}{\xi-\rho_{i}}v_{i}^{\Htran}\bigl(I-T(\xi)^{-1}T(\rho_{i})\bigr)u_{i}\md \xi = \frac{1}{2\pi\mi }\oint_{\partial \region}\frac{v_{i}^{\Htran}u_{i}}{\xi-\rho_{i}}\md \xi = 1.   
    \end{equation*}
    Otherwise, when $\rho_{i} \not=\lambda_{i}$, recall that $J_{\region} = \diag\{\lambda_{1},\dotsc,\lambda_{m}\}$. \Cref{thm:cif} yields that 
    \begin{equation*}
        \cif(\rho_{i}) = U_{\region}(\rho_{i}I-J_{\region})^{-1}W_{\region}^{\Htran}T(\rho_{i}).
    \end{equation*}
    Using the facts that $T(\lambda_{i})u_{i}=0$ and $w_{i}^{\Htran}T^{\prime}(\lambda_{i})u_{i}=1$, we have 
    \begin{align*}
        v_{i}^{\Htran}\cif(\rho_{i})u_{i}
        &= v_{i}^{\Htran}U_{\region}(\rho_{i}I-J_{\region})^{-1}W_{\region}^{\Htran}T(\rho_{i})u_{i} \\
        &= \frac{w_{i}^{\Htran}T(\rho_{i})u_{i}}{\rho_{i}-\lambda_{i}} \\
        &= \frac{w_{i}^{\Htran}\bigl(T(\rho_{i})-T(\lambda_{i})\bigr)u_{i}}{\rho_{i}-\lambda_{i}} \\
        &= w_{i}^{\Htran}T^{\prime}(\lambda_{i})u_{i}+\order(\abs{\rho_{i}-\lambda_{i}}) \\
        &= 1+\order(\epsilon).
    \end{align*}
    Thus, $\abs{\widehat{\alpha}_{i}-1}= \order(\gamma_{N}+\epsilon)$ holds for both situations.

    With a similar calculation, it follows that
    \begin{equation*}
        \norm{\widehat{y}_{i}} = \norm{V_{-i}^{\Htran}\nlf(\rho_{i})x_{i}} =  \norm{V_{-i}^{\Htran}\cif(\rho_{i})u_{i}}+\order(\gamma_{N}+\epsilon) = \order(\gamma_{N}+\epsilon),    
    \end{equation*}
    where, in the last equality, we used the facts that $V_{-i}^{\Htran}u_{i}=0$ and  
    \begin{align*}
        v_{k}^{\Htran}\cif(\rho_{i})u_{i}
        &= v_{k}^{\Htran}U_{\region}(\rho_{i}I-J_{\region})^{-1}W_{\region}^{\Htran}T(\rho_{i})u_{i} \\
        &= \frac{w_{k}^{\Htran}T(\rho_{i})u_{i}}{\rho_{i}-\lambda_{k}} \\
        &= \frac{w_{k}^{\Htran}\bigl(T(\rho_{i})-T(\lambda_{i})\bigr)u_{i}}{\rho_{i}-\lambda_{k}} \\
        &=\order(\epsilon)
    \end{align*}
    for any $1\leq i\neq k\leq m$ and $\rho_{i}$ not being an eigenvalue of $T(\cdot)$ in $\region$.
    When $\rho_{i}=\lambda_{i}$, the desired estimate for $\norm{\widehat{y}_{i}}$ immediately follows from 
    $V_{-i}^{\Htran}\cif(\lambda_{i})u_{i} = V_{-i}^{\Htran}u_{i}=0$.

    It remains to bound $\norm{\widehat{z}_{i}}$. For this purpose, we first note that $U_{\out}^{\dagger}\cif(\rho_{i})=0$ and $\norm{x_{i}-u_{i}}= \order(\epsilon)$ imply
    \begin{align*}
        \norm{\widehat{z}_{i}} &= \norm{U_{\out}^{\dagger}\nlf(\rho_{i})x_{i}} \\
        &\leq
        \norm{U_{\out}^{\dagger}\nlf(\rho_{i})u_{i}}+
        \norm{U_{\out}^{\dagger}\nlf(\rho_{i})(x_{i}-u_{i})} \\ 
        &=
        \norm{U_{\out}^{\dagger}\nlf(\rho_{i})u_{i}}+
        \norm{U_{\out}^{\dagger}\bigl(\nlf(\rho_{i})-\cif(\rho_{i})\bigr)(x_{i}-u_{i})} \\
        &=
        \norm{U_{\out}^{\dagger}\nlf(\rho_{i})u_{i}}+\order(\gamma_{N}\epsilon).
    \end{align*}
    Now it suffices to show that $\norm{U_{\out}^{\dagger}\nlf(\rho_{i})u_{i}}=\order(\gamma_{N}\epsilon)$.
    When $\rho_{i}=\lambda_{i}$, this term vanishes because
    \begin{equation*}
        U_{\out}^{\dagger}\nlf(\lambda_{i})u_{i} = U_{\out}^{\dagger}\sum_{j=1}^{N} \frac{\omega_{j}}{\xi_{j}-\lambda_{i}}\Bigl(I-T(\xi_{j})^{-1}T(\lambda_{i})\Bigr)u_{i} = \sum_{j=1}^{N} \frac{\omega_{j}}{\xi_{j}-\lambda_{i}}\cdot U_{\out}^{\dagger}u_{i} = 0,
    \end{equation*}
    where we used $U_{\out}^{\dagger}u_{i} = 0$ in the last equality.
    Otherwise, by \cref{thm:cif} and $
        U_{\out}^{\dagger}U_{\region}=0$, we know that 
    \begin{equation*}
        U_{\out}^{\dagger}H(\rho_{i})T(\rho_{i})u_{i} = U_{\out}^{\dagger}(I-\cif(\rho_{i}))u_{i}= U_{\out}^{\dagger}\bigl(I-U_{\region}(\rho_{i}I-J_{\region})^{-1}W_{\region}T(\rho_{i})\bigr)u_{i}=0.
    \end{equation*}
    Using Keldysh's theorem, we derive
    \begin{align*}
        U_{\out}^{\dagger}\nlf(\rho_{i})u_{i} &= -U_{\out}^{\dagger}\sum_{j=1}^{N}\frac{\omega_{j}}{\xi_{j}-\rho_{i}}T(\xi_{j})^{-1}T(\rho_{i})u_{i} \\ 
        &= -U_{\out}^{\dagger}\sum_{j=1}^{N}\frac{\omega_{j}}{\xi_{j}-\rho_{i}} \bigl(U_{\region}(\xi_{j}I-J_{\region})^{-1}W_{\region}^{\Htran}+H(\xi_{j})\bigr)T(\rho_{i})u_{i}\\ 
        &= -U_{\out}^{\dagger}\sum_{j=1}^{N}\frac{\omega_{j}}{\xi_{j}-\rho_{i}} H(\xi_{j})T(\rho_{i})u_{i}\\
        &= U_{\out}^{\dagger} \Bigl( H(\rho_i) - \sum_{j=1}^{N}\frac{\omega_{j}}{\xi_{j}-\rho_{i}} H(\xi_{j}) \Bigr) \bigl(T(\rho_{i})-T(\lambda_{i})\bigr)u_{i}\\
        &=\order(\gamma_{N}\epsilon).
    \end{align*}
Here, we used $U_{\out}^{\dagger}U_{\region}=0$ in the third equality,
$U_{\out}^{\dagger} H(\rho_i) T(\rho_i) u_i = 0$ and $T(\lambda_{i})u_{i}=0$ in the fourth equality,
as well as~\cref{eq:quadrature} in the last equality.
\end{proof}

Plugging \cref{lem:nl} into \cref{thm:main} and using \cref{prop:eps}, we obtain the convergence of NLFEAST.

\begin{corollary}
\label{cor:NLFEAST}
Consider NLFEAST with the filter $\nlf(\cdot)$ applied to an NEP $T(\cdot)$ satisfying \cref{asp:T}. 
Under \cref{asp:RR}, there exist constants $\epsnl>0$ and $\cnl>0$ depending only on $T(\cdot)\colon \region\mapsto \C^{n\times n}$ such that the following statements hold.

Assume that the accuracy of the numerical quadrature in \cref{eq:quadrature} satisfies $\gamma_{N}<1/\cnl$, then 
\begin{equation*}
    \angle(\mathcal{X}^{(k)},\mathcal{U}_{\region}) \leq (\cnl\gamma_{N})^{k} \angle(\mathcal{X}^{(0)},\mathcal{U}_{\region})
\end{equation*} 
holds for any initial subspace $\mathcal{X}^{(0)}$ satisfying $\angle(\mathcal{U}_{\region},\mathcal{X}^{(0)})\leq\epsnl$. Here, $\gamma_{N} =C_T \exp(-c_{T}N)$ for some constants $c_{T}$ and $C_T$ depending only on $T\colon \region\mapsto \C^{n\times n}$.
\end{corollary}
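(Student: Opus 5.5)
The plan is to chain \cref{prop:eps}, \cref{lem:nl} and \cref{thm:main} into a one-step contraction estimate, and then to iterate that estimate by induction on $k$.

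\textbf{Step 1: uniform constants.} Fix once and for all $\epsnl \le \epsilon_{T}$, where $\epsilon_{T}$ is from \cref{asp:RR}. Take it also small enough that the smallness hypothesis on $\epsilon$ in \cref{lem:nl} holds. This works because, by \cref{prop:eps}, $\max_i\max\{\abs{\rho_i-\lambda_i},\norm{y_i},\norm{z_i}\}\le C_1\angle(\mathcal{X},\mathcal{U}_{\region})$, where $C_1$ depends only on $T(\cdot)$. Since every hidden constant in \cref{prop:eps}, \cref{lem:nl} and \cref{thm:main} is independent of $\mathcal{X}$ and of $N$, the one-step conclusion is as follows. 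For every $m$-dimensional $\mathcal{X}$ with $\epsilon\defi\angle(\mathcal{X},\mathcal{U}_{\region})\le\epsnl$, the updated subspace $\widehat{\mathcal{X}}$ produced by one NLFEAST step satisfies
\begin{equation*}
\angle(\widehat{\mathcal{X}},\mathcal{U}_{\region})\le \cnl\gamma_N\,\epsilon .
\end{equation*}
Here $\cnl$ is the product of the constant in \cref{thm:main} with the factor $C_1$. That factor appears because \cref{lem:nl} is phrased in terms of the eigenpair error $\max\{\abs{\rho_i-\lambda_i},\norm{y_i},\norm{z_i}\}$, which \cref{prop:eps} bounds by $\order(\epsilon)$. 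Consequently the bounds \cref{eq:aspfilter} hold with $\epsilon$ equal to the subspace angle, which is exactly the hypothesis of \cref{thm:main}.

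\textbf{Step 2: induction.} Suppose $\angle(\mathcal{X}^{(k)},\mathcal{U}_{\region})\le(\cnl\gamma_N)^k\angle(\mathcal{X}^{(0)},\mathcal{U}_{\region})$. Since $\cnl\gamma_N<1$, this angle is at most $\angle(\mathcal{X}^{(0)},\mathcal{U}_{\region})\le\epsnl$. Step 1 therefore applies to $\mathcal{X}^{(k)}$ and gives the bound for $k+1$. The exponential form $\gamma_N=C_T\exp(-c_TN)$ is simply the quadrature assumption \cref{eq:quadrature}.

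\textbf{Main obstacle.} The delicate point is the bookkeeping of constants, which must be genuinely uniform. \cref{thm:main} only asserts $\order(\gamma_N\epsilon)$, so one has to check that its hidden constant does not depend on $N$ or on the current iterate. Its proof uses $\norm{(U_{\region}^{\dagger}\widehat{X})^{-1}}=\order(1)$, which in turn requires $\norm{U_{\region}^{\dagger}\widehat{X}-I}\le C(\gamma_N+\epsilon)\le 1/2$.

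To secure this, I would shrink $\epsnl$ further and, if necessary, enlarge $\cnl$ so that $\gamma_N<1/\cnl$ forces $C\gamma_N\le 1/4$. The condition $C\epsilon\le1/4$ then holds as well, and the $\order(1)$ bound becomes an explicit $2$. A further requirement is that $\widehat{\mathcal{X}}$ is again $m$-dimensional, so that \cref{asp:RR} applies at the next step. This follows from the same invertibility of $U_{\region}^{\dagger}\widehat{X}$, which forces $\widehat{X}$ to have rank $m$.
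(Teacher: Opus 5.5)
Your proposal is correct and takes the paper's route: the paper proves the corollary by plugging \cref{lem:nl} into \cref{thm:main}, using \cref{prop:eps}, and iterating the resulting one-step contraction. Your extra bookkeeping fills in details the paper leaves implicit: you absorb the smallness of $\gamma_N$ needed for $\norm{(U_{\region}^{\dagger}\widehat{X})^{-1}}=\order(1)$ into the choice of $\cnl$, and you check that $\widehat{\mathcal{X}}$ remains $m$-dimensional.
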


\begin{remark}
The constant $\cnl$ in \cref{cor:NLFEAST} may depend on the eigenvalue and eigenvector condition numbers of $T(\cdot)$.
This is different from subspace iteration or FEAST for linear (non-Hermitian) eigenvalue problems since in NLFEAST, we have to perform extraction in each iteration. However, since $\gamma_{N}$ can be made arbitrarily close to zero by using a sufficiently large number of quadrature nodes, NLFEAST can always be made (linearly) convergent.
\end{remark}

To conclude this section, we stress that although the convergence of NLFEAST also requires a sufficiently accurate quadrature rule in \cref{eq:quadrature}, this requirement is fundamentally different from the ``failure'' of being an iterative method. The ``failure'' refers to the fact that, for any inexact quadrature rule, the eigenpairs produced by those methods, such as Beyn's method, are generically \emph{not} eigenpairs of $T(\cdot)$.
In contrast, for NLFEAST, given an NEP satisfying \cref{asp:T}, there exists a $\gamma_{N}$ such that for any quadrature rule satisfying \cref{eq:quadrature}, the computed eigenpairs (linearly) converge to the exact eigenpairs of $T(\cdot)$ as the iterations proceed.
Moreover, for a prescribed $\gamma_{N}$, only a finite number of quadrature nodes is required to meet \cref{eq:quadrature}.

\section{Numerical experiments}
\label{sec:numexp}

In this section, we provide some numerical results to illustrate the convergence behavior and
efficiency of the framework described in~\cref{algo:iree}.
Throughout this section, we choose NLFEAST (\cref{algo:NLFEAST}) as a  concrete instance of~\cref{algo:iree}.
A discussion of other instances within the framework of~\cref{algo:iree}
is provided in~\cref{sec:nlfeast_mod}.
All numerical experiments were performed using MATLAB R2023b on a Linux server
with two 16-core Intel Xeon Gold 6226R 2.90 GHz CPUs and 1024~GB main memory.

\subsection{Test examples and experiment settings}

We consider the nine NEPs listed in~\cref{tab:expsinfo}, with the corresponding contours and eigenvalues of interest illustrated in~\cref{fig:9exp_region}.
Almost all are from the NLEVP collection~\cite{BHM2013}, except for
\texttt{photonics}, which arises from the computation of resonant frequencies for dispersive photonic materials and is described in~\cite{DAG2020,GMG2017}.

\begin{table}[htbp]
\centering
\caption{List of test problems.
QEP, PEP, REP, and NEP stand for quadratic, polynomial, rational and (general) nonlinear eigenvalue problems, respectively.
The integers $n$ and $m$ are the size of the problem and the number of eigenvalues of interest, respectively.
}
\begin{tabular}{c@{\hspace{0.8em}}c@{\hspace{0.8em}}c@{\hspace{0.8em}}c}
\toprule
Problem & Type & \(n\) & \(m\) \\
\midrule
{\tt{spring}}             & QEP  & \(3000 \) & \(32\)   \\
{\tt{acoustic\_wave\_2d}} & QEP  & \(9900 \) & \(10\) \\ 
{\tt{butterfly}}          & PEP  & \(5000 \) & \(9 \)  \\ 
{\tt{loaded\_string}}     & REP  & \(20000\) & \(10\)   \\ 
{\tt{photonics}}          & REP  & \(20363\) & \(16\)  \\ 
{\tt{railtrack2\_rep}}    & REP  & \(35955\) & \(2 \)\\ 
{\tt{hadeler}}            & NEP  & \(5000 \) & \(13\) \\ 
{\tt{gun}}                & NEP  & \(9956 \) & \(21\)  \\ 
{\tt{pdde\_symmetric}}    & NEP  & \(16281\) & \(5 \)  \\ 
\bottomrule
\end{tabular}
\label{tab:expsinfo}
\end{table}

\begin{figure}[tb!]
\centering
\includegraphics[width=0.9\linewidth]{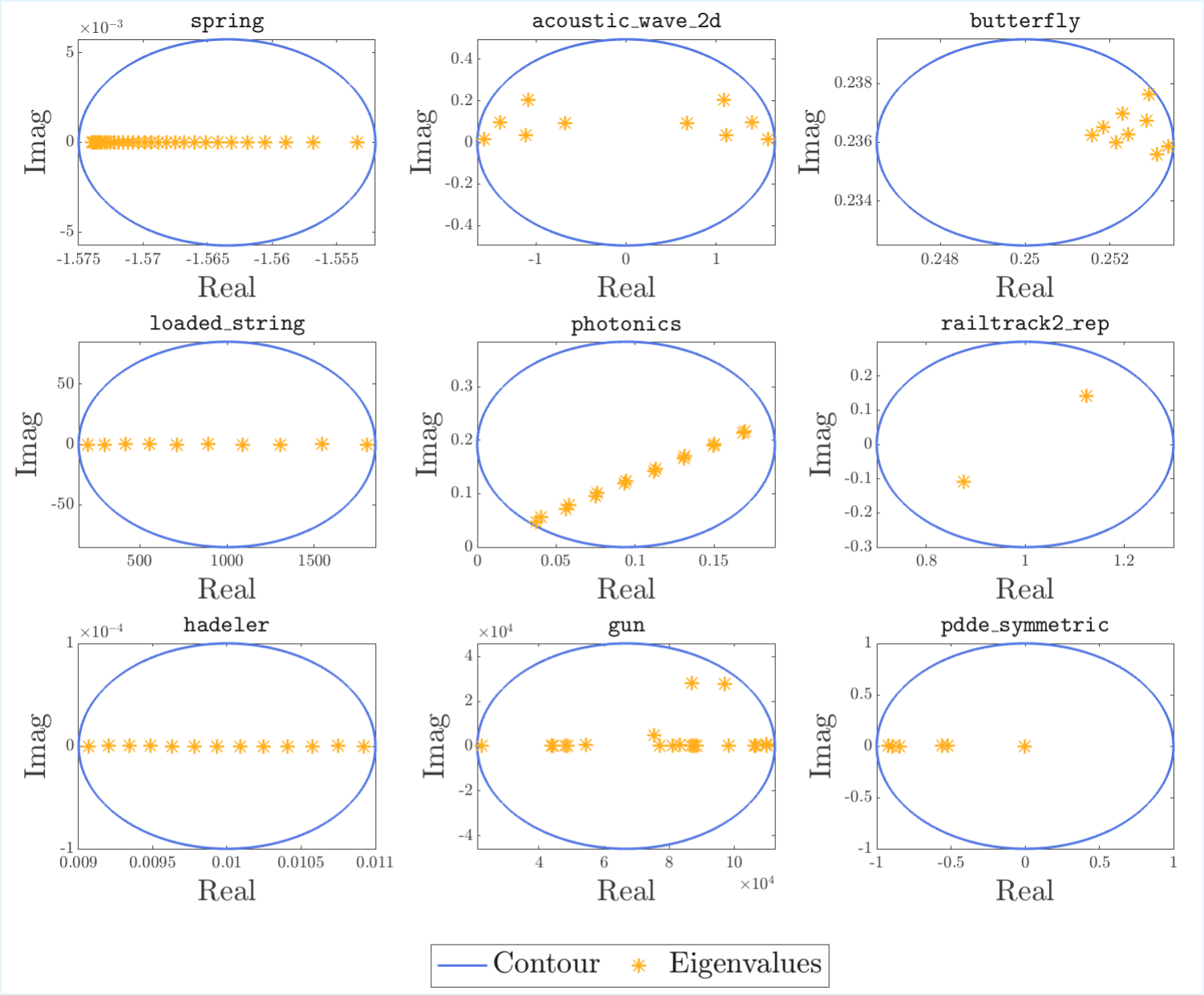}
\caption{The contour and eigenvalues of the test problems in~\cref{tab:expsinfo}.}
\label{fig:9exp_region}
\end{figure}

Following the representation adopted in the NLEVP collection, the test NEPs are given in the form
\[
T(\xi) = g_0(\xi)T_{0} + \cdots + g_p(\xi)T_p,
\]
where \(g_j(\xi)\colon\region\mapsto\C\) is a scalar-valued function and \(T_j\in\mathbb{C}^{n\times n}\) is a constant matrix for \(j=0\), \(\dotsc\), \(p\).
The (relative) residual of an approximate eigenpair \((\rho,x)\) is defined as
\[
r=\frac{\norm{T(\rho)x}}{\bigl(\sum_{j=0}^p\abs{g_{j}(\rho)}\norm{T_{j}}\bigr)\norm{x}}.
\]
At the \(k\)th iteration, suppose that all the computed approximate eigenpairs are
\((\rho_i^{(k)},x_i^{(k)})\) for \(1\le i\le m\),
then, the residual of this iteration is defined as
\[
r^{(k)}=\max_{1\leq i\leq m} r_i^{(k)}=
\max_{1\leq i\leq m} \frac{\norm{T(\rho_i^{(k)})x_i^{(k)}}}{\bigl(\sum_{j=0}^p\abs{g_{j}(\rho_i^{(k)})}\norm{T_{j}}\bigr)\norm{x_i^{(k)}}}.
\]
In this section, we use \(r^{(k)}\le10^{-12}\) as the stopping criterion for all numerical results reported.

In line~\ref{alg-step:proj} of \cref{algo:iree}, we need to ensure that the solution of the 
compressed NEP satisfies~\cref{asp:RR}.
Unless stated otherwise, we apply Beyn's method to solve the compressed NEP \(Q^{\Htran} T(\cdot)Q\), where $Q$ is an orthonormal basis of $\mathcal{X}^{(k)}$.
The number of quadrature nodes in Beyn's method is chosen sufficiently large to match the requirements of~\cref{asp:RR}.
Since the size of the compressed problem is much smaller than that of the original problem, such a choice does not incur significant additional computational cost.

\subsection{Linear convergence of NLFEAST}

We first illustrate the linear convergence behavior of NLFEAST for the NEPs from~\cref{tab:expsinfo}.
Using the trapezoidal rule with $N$ (as specified in \cref{tab:time}) quadrature nodes, the observed runtimes and convergence histories are reported in \cref{tab:time} and \cref{fig:9exp_conv}, respectively.
The linear convergence of NLFEAST is clearly visible for all nine test problems, validating our theoretical results from \cref{sec:conv}.

\begin{table}[htbp]
\centering

\caption{Computation time of NLFEAST (with  $N$ quadrature nodes) for solving the NEPs from~\cref{tab:expsinfo}.
The total runtime as well as the individual times consumed by
LU decompositions, filtering, and extraction
are listed in seconds (s).
}
\label{tab:time}
\begin{tabular}{c@{\hspace{0.8em}}c@{\hspace{0.8em}}c@{\hspace{0.8em}}c@{\hspace{0.8em}}c@{\hspace{0.8em}}c}
\toprule
Problem & $N$ & LU (s) & Filtering (s) & Extraction (s) & Total (s)\\
\midrule
{\tt{spring}}     &8       & $0.1526$ & $0.1108$ & $0.08253$ & $0.4191$\\
{\tt{acoustic\_wave\_2d}}&8 & $0.3831$ & $8.074$  & $0.9639$  & $10.13$ \\ 
{\tt{butterfly}}&16          & $0.3651$ & $3.763$  & $0.4453$  & $4.848$ \\ 
{\tt{loaded\_string}}&8     & $3.205$  & $1.406$  & $0.2706$  & $5.387$ \\ 
{\tt{photonics}}       &32   & $10.06$  & $142.1$  & $3.464$   & $157.8$ \\ 
{\tt{railtrack2\_rep}}    &16& $68.39$  & $50.94$  & $0.1413$  & $120.3$ \\ 
{\tt{hadeler}}            &8& $9.312$  & $3.119$  & $2.425$   & $19.24$ \\ 
{\tt{gun}}                &32& $11.32$  & $90.65$  & $2.37$    & $105.7$ \\ 
{\tt{pdde\_symmetric}}    &8& $0.5877$ & $4.536$  & $0.2085$  & $5.584$ \\ 
\bottomrule
\end{tabular}
\end{table}

\begin{figure}[tb!]
\centering
\includegraphics[width=0.9\linewidth]{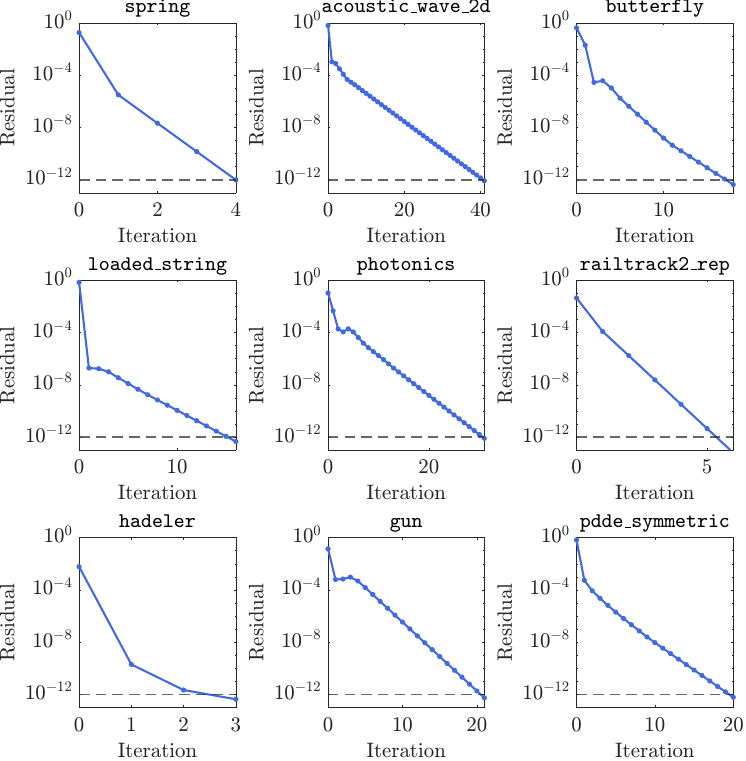}
\caption{Convergence history of NLFEAST for solving the NEPs from~\cref{tab:expsinfo}.}
\label{fig:9exp_conv}
\end{figure}

\subsection{Comparison between NLFEAST and Beyn's method}

The purpose of this section is to highlight the benefits of using a contour integral-based nonlinear eigensolver that allows for iterative refinement, such as NLFEAST. 

It is important to recall that the linear systems arising in NLFEAST involve matrices $T(\xi_{j})$, $j=1,\dotsc,N$, that remain the same throughout the iterations.
Thus, we just need to compute (sparse) LU factorizations of these $N$ matrices only once, during the first iteration, and can reuse these factorizations in later iterations. 

In contrast, the only way to improve the accuracy of Beyn's method is to increase the number of quadrature nodes.
A practical inconvenience is that one usually does not know in advance how many quadrature nodes are needed to reach a specified accuracy.
Even if \(N\) can be (tightly) estimated, achieving higher accuracy inevitably needs more quadrature nodes.
This leads to linear systems involving a larger number of distinct matrices and, in turn, requires computing a larger number of matrix factorizations.
Since matrix factorizations are usually more expensive than subsequent (triangular) linear solves, one expects NLFEAST to be faster than Beyn's method when desiring highly accurate eigenpair approximations.

In this numerical experiment, we select the three test problems \texttt{acoustic\_wave\_2d} (QEP), \texttt{photonics} (REP), and \texttt{gun} (NEP) from \cref{tab:expsinfo},
and compare the performance of NLFEAST with Beyn's method in \cref{fig:t2a}.
For NLFEAST, we fix the number of quadrature nodes to \(N=16\), \(32\), or \(48\), and plot the residual vs. the accumulated computational time after every iteration. For Beyn's method, the residual vs. the computational time is plotted when choosing $N=16+kd$ quadrature nodes for $k=1,2,\dotsc$, where the step sizes $d$ are set to $8$, $64$, and $16$ for the three problems, respectively. Note that NLFEAST with $N=16$ fails to converge on \texttt{photonics}.

As shown in~\cref{fig:t2a}, except when very few quadrature nodes are used (such as $N = 16$ for \texttt{photonics}), NLFEAST consistently outperforms Beyn's method, usually achieving the small residuals within less time. Furthermore, the convergence rate of NLFEAST is much more favorable, because its cost for sufficiently many iterations is governed primarily by the cost of triangular solves rather than matrix factorizations.

\begin{figure}[tb!]
\centering
\includegraphics[width=0.95\linewidth]{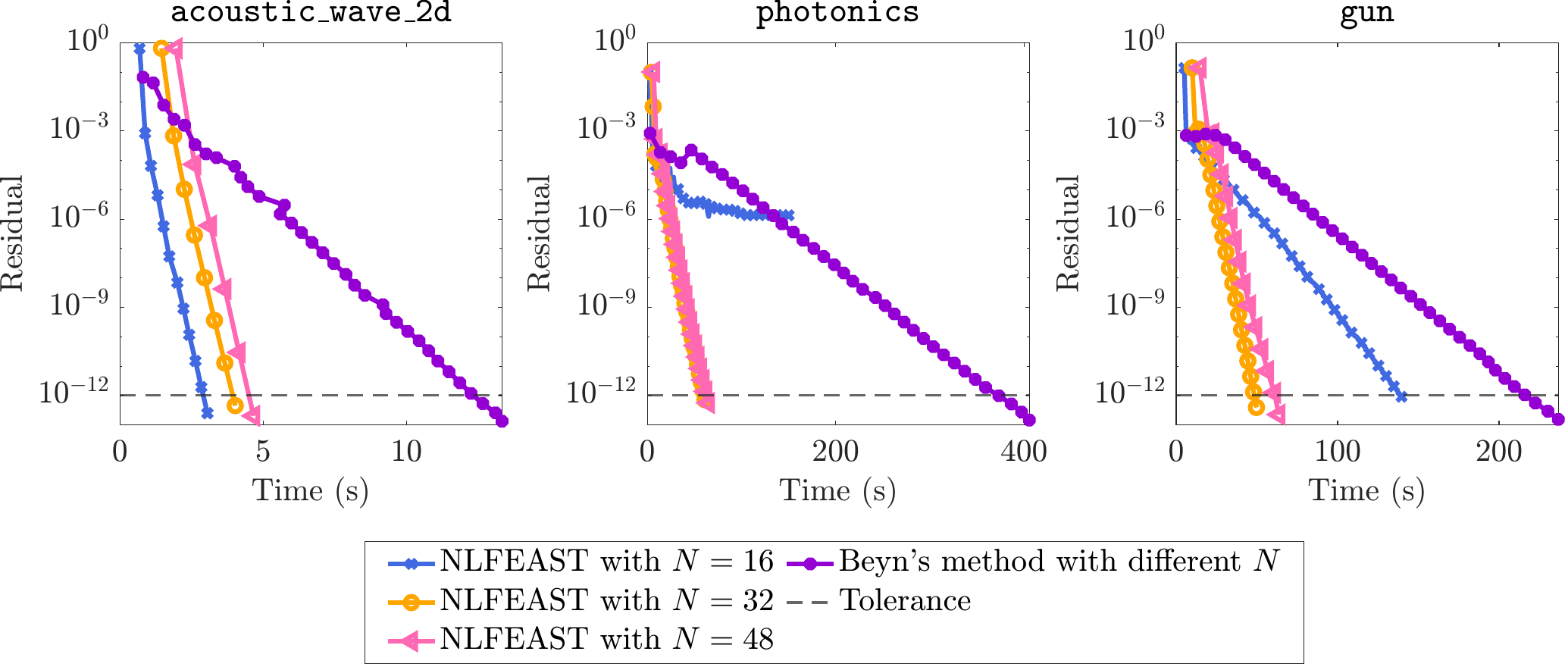}
\caption{Residual vs. computational time  for NLFEAST and Beyn's method for three test problems.}
\label{fig:t2a}
\end{figure}

\subsection{Special cases where
 different eigenvalues share the same eigenvector}
 \label{subsec:numexpSameEvec}

One important feature that distinguishes NEPs from linear eigenvalue problems is that eigenvectors belonging to different eigenvalues are not necessarily linearly independent.
This can be a hindrance when solving NEPs. For example, if one wants to avoid ghost eigenvalues by performing (re)orthogonalization as in Krylov subspace methods for linear eigenvalue problems, some eigenvalues could be missed due to linear dependencies among eigenvectors \cite{Kressner2009}.
For NLFEAST, once we solve the compressed NEP in a reliable way, such as companion linearization for polynomial/rational eigenvalue problems, the following numerical experiment shows that NLFEAST is not affected by this issue.

Consider the following $3$-dimensional quadratic eigenvalue problem
\begin{equation}
\label{eq:qep_daniel}
T(\xi)=
\bmat{0 & 12 & 0\\-2 & 14 & 0\\0 & 0 & 0}
+\xi\bmat{-1 & -6 & 0\\2 & -9 & 0\\0 & 0 & 0}
+\xi^2\bmat{1 & 0 & 0\\0 & 1 & 0\\0 & 0 & 1}
\end{equation}
inspired by~\cite[Eq.~(2)]{Kressner2009}, which has \(5\) different eigenvalues \(\lambda_0=0\) (with algebraic multiplicity \(2\)), \(\lambda_1=1\), \(\lambda_2=2\),
\(\lambda_3=3\), \(\lambda_4=4\) with corresponding left eigenvectors
\[
w_0=\bmat{0\\0\\1},\qquad
w_1=\bmat{1\\-1\\0},\qquad
w_2=\bmat{1\\-1\\0},\qquad
w_3=\bmat{2\\-3\\0},\qquad
w_4=\bmat{1\\-2\\0},
\]
and right eigenvectors
\[
v_0=\bmat{0\\0\\1},\qquad
v_1=\bmat{1\\0\\0},\qquad
v_2=\bmat{0\\1\\0},\qquad
v_3=\bmat{1\\1\\0},\qquad
v_4=\bmat{1\\1\\0},
\]
respectively.
We consider three different test contours $\mathcal{C}_{1}=\{\abs{z-1.5}=1\}$, $\mathcal{C}_{2}=\{\abs{z-2.5}=1\}$ and $\mathcal{C}_{3}=\{\abs{z-3.5}=1\}$ as in~\cref{fig:same_eigv}.
We note that $\mathcal{C}_{1}$ encloses eigenvalues with linearly independent right eigenvectors but same left eigenvectors,
$\mathcal{C}_{2}$ encloses eigenvalues with linearly independent left eigenvectors and linearly independent right eigenvectors,
while the eigenvalues enclosed by $\mathcal{C}_{3}$ share the same right eigenvectors but linearly independent left eigenvectors.

\begin{figure}[tb!]
\centering
\includegraphics[width=0.6\linewidth]{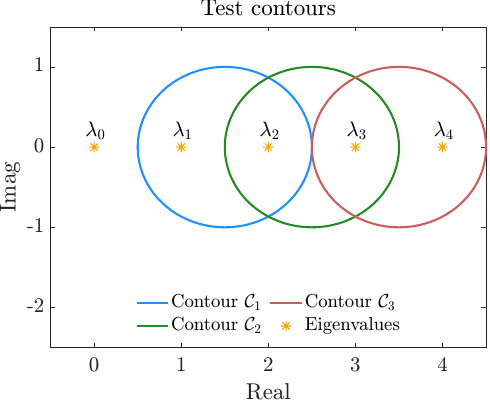}
\caption{The eigenvalues of the test example~\eqref{eq:qep_daniel}
and three test contours.
The eigenvalues $\lambda_{1}$ and $\lambda_{2}$ share the same left eigenvectors; and $\lambda_{3}$ and $\lambda_{4}$ share the same right eigenvectors.}
\label{fig:same_eigv}
\end{figure}

The eigenvalues computed by NLFEAST and Beyn's method on the three contours are reported in~\cref{tab:same_eigv}. The results indicate that Beyn’s method can handle only cases in which both the left and right eigenvectors associated with the eigenvalues inside the contour are linearly independent. In contrast, NLFEAST remains robust even when the left and right eigenvectors corresponding to different eigenvalues are identical; provided a reliable solver is used for the compressed NEP, NLFEAST is still able to produce accurate results.

\begin{table}[h]
\caption{Approximate eigenvalues and corresponding residual computed by NLFEAST and Beyn's method on the three test contours in~\cref{fig:same_eigv}. In NLFEAST, we solve the compressed NEP by companion linearization.}
\label{tab:same_eigv}
\centering
    \begin{tabular}{cccccc}
\toprule
\multicolumn{2}{c}{} & \multicolumn{2}{c}{NLFEAST} & \multicolumn{2}{c}{Beyn's method} \\
& & \(\check\lambda_1\) & \(\check\lambda_2\) & \(\check\lambda_1\) & \(\check\lambda_2\)\\ 
\midrule
\multirow{2}*{$\mathcal{C}_{1}$}
& Eigenvalue & \(1.000\) & \(2.000\) & \(0.457-0.174\cdot\mi\) & \(1.653+0.104\cdot\mi\)\\
& Residual & \(2.912\times10^{-16}\) & \(8.889\times10^{-17}\) & \(9.658\times10^{-1}\) & \(5.361\times10^{-1}\)\\
\midrule
\multirow{2}*{$\mathcal{C}_{2}$}
& Eigenvalue & \(2.000\) & \(3.000\) & \(2.000\) & \(3.000\)\\
& Residual & \(1.376\times10^{-16}\) & \(3.717\times10^{-17}\) & \(1.577\times10^{-14}\) & \(2.151\times10^{-15}\) \\
\midrule
\multirow{2}*{$\mathcal{C}_{3}$}
& Eigenvalue & \(3.000\) & \(4.000\) & \(2.015-0.073\cdot\mi\) & \(3.812+0.151\cdot\mi\)\\
& Residual & \(1.041\times10^{-16}\) & \(5.038\times10^{-17}\) & \(1.368\times10^{-2}\) & \(4.959\times10^{-1}\)\\ 
\bottomrule
    \end{tabular}
\end{table}

\section{Concluding remarks}

In this paper, we proposed a general framework for iterative contour integral-based methods for nonlinear eigenvalue problems. Under arguably mild assumptions, we establish linear convergence for methods from this framework, including NLFEAST. 
One promising direction for future research is to explore this framework further and design novel iterative nonlinear eigensolvers with improved properties.

\appendix

\section{Some variants of NLFEAST}
\label{sec:nlfeast_mod}

A natural question is whether there exist algorithms other than NLFEAST (\cref{algo:NLFEAST}) that fit
the framework of~\cref{algo:iree}.
For linear eigenvalue problems, a family of such algorithms can be constructed by choosing a different
filter \(\itf\), including, for example, subspace iteration, FEAST, and Riemannian gradient descent.
In contrast, for nonlinear eigenvalue problems, constructing a filter \(\itf\) that satisfies the
convergence conditions established in~\cref{sub-sec:suffcond} is rather challenging,
as can already be seen from the analysis presented in~\cref{sub-sec:NLFEAST}.
To the best of our knowledge, apart from NLFEAST, no other iterative refinement algorithms are currently known to conform to the framework of~\cref{algo:iree}.

For exploratory purposes, we include \cref{algo:NLFEAST_relaxed}, which is in the spirit of~\cref{algo:iree} but chooses the filter adaptively (and differently) in every iteration.
{The basic idea of~\cref{algo:NLFEAST_relaxed} is motivated by the observation that,
in each iteration of NLFEAST, it obtains the updated vector \(\widehat{x}_{i}^{(k)}\) from a fixed linear combination of $x_{i}^{(k)}$ and 
\[
P_{i}^{(k)}x_{i}^{(k)}\defi \sum_{j=1}^{N} \frac{\omega_{j}}{\xi_{j}-\rho_{i}^{(k)}}T(\xi_{j})^{-1}T(\rho_{i}^{(k)})x_{i}^{(k)}
\]
as \(x_{i}^{(k)} - Px_{i}^{(k)}\).
This suggests that seeking a better update within the subspace \(\Span\{x_{i}^{(k)}, P_{i}^{(k)}x_{i}^{(k)}\}\)
may accelerate convergence.
To this end, we attempt to approximate a Newton-like descent direction \((T(\rho_{i}^{(k)}))^{-1}T^{\prime}(\rho_{i}^{(k)})x_{i}^{(k)}\) within this subspace, resulting in the (simple) optimization problem of line~\ref{line:acc} for choosing the vector from the subspace.}
\begin{algorithm}[htbp]
\caption{Relaxed NLFEAST}
\label{algo:NLFEAST_relaxed}
\begin{algorithmic}[1]
\REQUIRE Matrix-valued function $T$, and an $m$-dimensional initial subspace $\mathcal{X}^{(0)}\subset \C^{n}$.
\ENSURE Ritz pairs $(\rho_{i},x_{i})$ for $1\leq i\leq m$.
\STATE Compute quadrature nodes and weights $(\xi_{j},\omega_{j})$ for $1\leq j\leq N$.
\FOR{\(k=0\), \(1\), \(\dotsc\)}
\STATE Extract Ritz pairs $(\rho_{i}^{(k)},x_{i}^{(k)})$ for $1\leq i\leq m$ in $\mathcal{X}^{(k)}$ by Rayleigh--Ritz procedure.
\FOR{\(i=1\), \(2\), \(\dotsc\), \(m\)}
\STATE Compute
\[
y_{i}^{(k)}=
\sum_{j=1}^{N} \frac{\omega_{j}}{\xi_{j}-\rho_{i}^{(k)}}\Bigl(T(\xi_{j})^{-1}T(\rho_{i}^{(k)})\Bigr)x_{i}^{(k)}.
\]
\STATE \label{line:acc} Compute \(\widehat{x}_{i}^{(k)}=\alpha x_{i}^{(k)}+\beta y_{i}^{(k)}\) where \(\alpha\) and \(\beta\) are the solution of
\[
\argmin_{\alpha, \beta\in\mathbb{C}}\bigl\lVert T(\rho_{i}^{(k)})(\alpha x_{i}^{(k)}+\beta y_{i}^{(k)})
-T'(\rho_{i}^{(k)})x_{i}^{(k)}\bigr\rVert.
\]
\ENDFOR
\STATE Form a new subspace $\mathcal{X}^{(k+1)}=\Span\{\widehat{x}_{1}^{(k)},\dotsc,\widehat{x}_{m}^{(k)}\}$.
\ENDFOR
\end{algorithmic}
\end{algorithm}

The convergence history of~\cref{algo:NLFEAST_relaxed} on the $9$ test examples are shown in~\cref{fig:nlfeast_relaxed}.
In line with being inspired by Newton's method, it only exhibits local convergence,
and for some test problems several iterations are required before the local convergence region is entered; see \texttt{butterfly} and \texttt{hadeler} in~\cref{fig:nlfeast_relaxed}.
Moreover, it should be emphasized that although~\cref{algo:NLFEAST_relaxed} can be viewed as a relaxation of NLFEAST,
its performance is not necessarily superior to NLFEAST, and it strongly depends on the problems.
In particular, \cref{algo:NLFEAST_relaxed} fails to converge for the \texttt{railtrack2\_rep} problem.
A possible modification is to enlarge the search space \(\Span\{x_{i}^{(k)}, P_{i}^{(k)}x_{i}^{(k)}\}\)
with \((P_{i}^{(k)})^jx_{i}^{(k)}\) for \(j=2\), \(3\), \(\dotsc\),
which will lead to an Arnoldi style method.
However, this modification is not always observed to perform better.
Further investigation related to~\cref{algo:NLFEAST_relaxed} is left for future work.
\begin{figure}[tb!]
\centering
\includegraphics[width=0.9\linewidth]{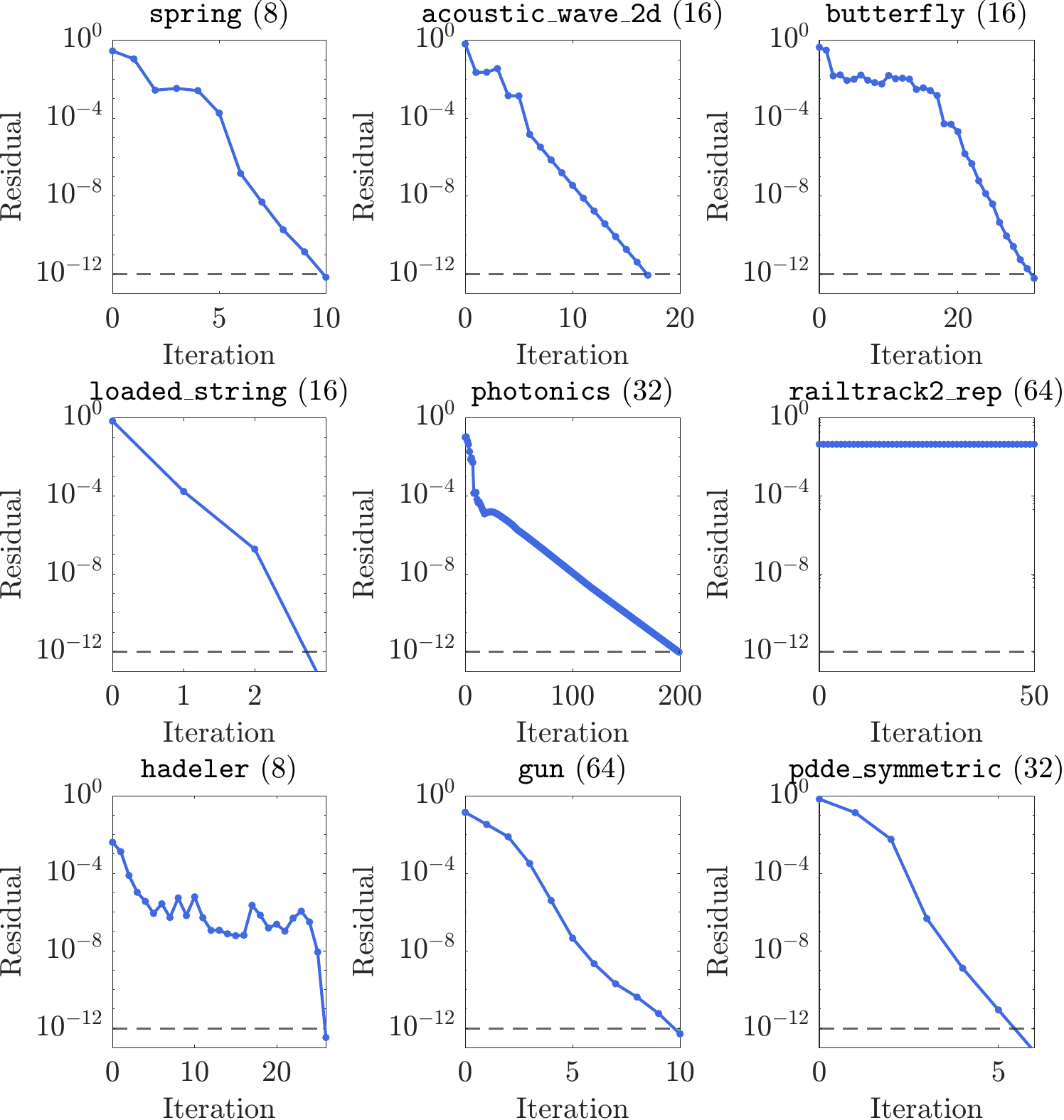}
\caption{Convergence history of \cref{algo:NLFEAST_relaxed} applied to the nine NEPs from~\cref{tab:expsinfo}.
The numbers in parenthesis denote the number of quadrature nodes (\(N\)) used in each example.}
\label{fig:nlfeast_relaxed}
\end{figure}

\section*{Statements and Declarations}
\begin{itemize}
	\item Funding: Jose E.~Roman was supported by grant PID2022-139568NB-I00 funded by MCIN/AEI/10.13039/501100011033 and by ERDF/EU.
	               Meiyue Shao was partly supported by the National Natural Science Foundation of China grant No.~92370105.
	\item Code availability: Scripts to reproduce numerical results are publicly available at \href{https://github.com/IzayoiYuuki/Linear_convergence_of_iterative_contour_integral-based_eigensolvers_for_nonlinear_eigenvalue_problem}{https://github.com/IzayoiYuuki/Linear\_convergence\_of\_iterative\_contour\_integral-based\_eigensolvers\_for\_nonlinear\_eigenvalue\_problem}
	\item Conflict of interests: Not applicable.
\end{itemize}



\begin{thebibliography}{34}
\ifx \bisbn   \undefined \def \bisbn  #1{ISBN #1}\fi
\ifx \binits  \undefined \def \binits#1{#1}\fi
\ifx \bauthor  \undefined \def \bauthor#1{#1}\fi
\ifx \batitle  \undefined \def \batitle#1{#1}\fi
\ifx \bjtitle  \undefined \def \bjtitle#1{#1}\fi
\ifx \bvolume  \undefined \def \bvolume#1{\textbf{#1}}\fi
\ifx \byear  \undefined \def \byear#1{#1}\fi
\ifx \bissue  \undefined \def \bissue#1{#1}\fi
\ifx \bfpage  \undefined \def \bfpage#1{#1}\fi
\ifx \blpage  \undefined \def \blpage #1{#1}\fi
\ifx \burl  \undefined \def \burl#1{\textsf{#1}}\fi
\ifx \doiurl  \undefined \def \doiurl#1{\url{https://doi.org/#1}}\fi
\ifx \betal  \undefined \def \betal{\textit{et al.}}\fi
\ifx \binstitute  \undefined \def \binstitute#1{#1}\fi
\ifx \binstitutionaled  \undefined \def \binstitutionaled#1{#1}\fi
\ifx \bctitle  \undefined \def \bctitle#1{#1}\fi
\ifx \beditor  \undefined \def \beditor#1{#1}\fi
\ifx \bpublisher  \undefined \def \bpublisher#1{#1}\fi
\ifx \bbtitle  \undefined \def \bbtitle#1{#1}\fi
\ifx \bedition  \undefined \def \bedition#1{#1}\fi
\ifx \bseriesno  \undefined \def \bseriesno#1{#1}\fi
\ifx \blocation  \undefined \def \blocation#1{#1}\fi
\ifx \bsertitle  \undefined \def \bsertitle#1{#1}\fi
\ifx \bsnm \undefined \def \bsnm#1{#1}\fi
\ifx \bsuffix \undefined \def \bsuffix#1{#1}\fi
\ifx \bparticle \undefined \def \bparticle#1{#1}\fi
\ifx \barticle \undefined \def \barticle#1{#1}\fi
\bibcommenthead
\ifx \bconfdate \undefined \def \bconfdate #1{#1}\fi
\ifx \botherref \undefined \def \botherref #1{#1}\fi
\ifx \url \undefined \def \url#1{\textsf{#1}}\fi
\ifx \bchapter \undefined \def \bchapter#1{#1}\fi
\ifx \bbook \undefined \def \bbook#1{#1}\fi
\ifx \bcomment \undefined \def \bcomment#1{#1}\fi
\ifx \oauthor \undefined \def \oauthor#1{#1}\fi
\ifx \citeauthoryear \undefined \def \citeauthoryear#1{#1}\fi
\ifx \endbibitem  \undefined \def \endbibitem {}\fi
\ifx \bconflocation  \undefined \def \bconflocation#1{#1}\fi
\ifx \arxivurl  \undefined \def \arxivurl#1{\textsf{#1}}\fi
\csname PreBibitemsHook\endcsname

\bibitem[\protect\citeauthoryear{Araujo~C. et~al.}{2020}]{CER2020}
\begin{barticle}
\bauthor{\bsnm{Araujo~C.}, \binits{J.C.}},
\bauthor{\bsnm{Campos}, \binits{C.}},
\bauthor{\bsnm{Engstr{\"o}m}, \binits{C.}},
\bauthor{\bsnm{Roman}, \binits{J.E.}}:
\batitle{Computation of scattering resonances in absorptive and dispersive media with applications to metal-dielectric nano-structures}.
\bjtitle{J. Comput.\ Phys.}
\bvolume{407},
\bfpage{109220}
(\byear{2020})
\doiurl{10.1016/j.jcp.2019.109220}
\end{barticle}
\endbibitem

\bibitem[\protect\citeauthoryear{Asakura et~al.}{2009}]{AST2009}
\begin{barticle}
\bauthor{\bsnm{Asakura}, \binits{J.}},
\bauthor{\bsnm{Sakurai}, \binits{T.}},
\bauthor{\bsnm{Tadano}, \binits{H.}},
\bauthor{\bsnm{Ikegami}, \binits{T.}},
\bauthor{\bsnm{Kimura}, \binits{K.}}:
\batitle{A numerical method for nonlinear eigenvalue problems using contour integrals}.
\bjtitle{JSIAM Lett.}
\bvolume{1},
\bfpage{52}--\blpage{55}
(\byear{2009})
\doiurl{10.14495/jsiaml.1.52}
\end{barticle}
\endbibitem

\bibitem[\protect\citeauthoryear{Baydoun et~al.}{2021}]{SMB2021}
\begin{barticle}
\bauthor{\bsnm{Baydoun}, \binits{S.K.}},
\bauthor{\bsnm{Voigt}, \binits{M.}},
\bauthor{\bsnm{Goderbauer}, \binits{B.}},
\bauthor{\bsnm{Jelich}, \binits{C.}},
\bauthor{\bsnm{Marburg}, \binits{S.}}:
\batitle{A subspace iteration eigensolver based on {C}auchy integrals for vibroacoustic problems in unbounded domains}.
\bjtitle{Int.\ J.\ Numer.\ Methods Eng.}
\bvolume{122}(\bissue{16}),
\bfpage{4250}--\blpage{4269}
(\byear{2021})
\doiurl{10.1002/nme.6701}
\end{barticle}
\endbibitem

\bibitem[\protect\citeauthoryear{Betcke and Voss}{2007}]{BV2007}
\begin{barticle}
\bauthor{\bsnm{Betcke}, \binits{M.M.}},
\bauthor{\bsnm{Voss}, \binits{H.}}:
\batitle{Stationary {S}chr{\"o}dinger equations governing electronic states of quantum dots in the presence of spin-orbit splitting}.
\bjtitle{Appl.\ Math.}
\bvolume{52},
\bfpage{267}--\blpage{284}
(\byear{2007})
\doiurl{10.1007/s10492-007-0014-5}
\end{barticle}
\endbibitem

\bibitem[\protect\citeauthoryear{Betcke et~al.}{2013}]{BHM2013}
\begin{barticle}
\bauthor{\bsnm{Betcke}, \binits{T.}},
\bauthor{\bsnm{Higham}, \binits{N.J.}},
\bauthor{\bsnm{Mehrmann}, \binits{V.}},
\bauthor{\bsnm{Schr\"{o}der}, \binits{C.}},
\bauthor{\bsnm{Tisseur}, \binits{F.}}:
\batitle{{NLEVP}: A collection of nonlinear eigenvalue problems}.
\bjtitle{ACM Trans.\ Math.\ Software}
\bvolume{39}(\bissue{2}),
\bfpage{1}--\blpage{28}
(\byear{2013})
\doiurl{10.1145/2427023.2427024}
\end{barticle}
\endbibitem

\bibitem[\protect\citeauthoryear{Beyn}{2012}]{Beyn2012}
\begin{barticle}
\bauthor{\bsnm{Beyn}, \binits{W.-J.}}:
\batitle{An integral method for solving nonlinear eigenvalue problems}.
\bjtitle{Linear Algebra Appl.}
\bvolume{436}(\bissue{10}),
\bfpage{3839}--\blpage{3863}
(\byear{2012})
\doiurl{10.1016/j.laa.2011.03.030}
\end{barticle}
\endbibitem

\bibitem[\protect\citeauthoryear{Brennan et~al.}{2023}]{BEG2023}
\begin{barticle}
\bauthor{\bsnm{Brennan}, \binits{M.C.}},
\bauthor{\bsnm{Embree}, \binits{M.}},
\bauthor{\bsnm{Gugercin}, \binits{S.}}:
\batitle{Contour integral methods for nonlinear eigenvalue problems: a systems theoretic approach}.
\bjtitle{SIAM Rev.}
\bvolume{65}(\bissue{2}),
\bfpage{439}--\blpage{470}
(\byear{2023})
\doiurl{10.1137/20M1389303}
\end{barticle}
\endbibitem

\bibitem[\protect\citeauthoryear{Brenneck and Polizzi}{2020}]{BP2020}
\begin{barticle}
\bauthor{\bsnm{Brenneck}, \binits{J.}},
\bauthor{\bsnm{Polizzi}, \binits{E.}}:
\batitle{An iterative method for contour-based nonlinear eigensolvers}.
\bjtitle{arXiv preprint arXiv:2007.03000}
(\byear{2020})
\doiurl{10.48550/arXiv.2007.03000}
\end{barticle}
\endbibitem

\bibitem[\protect\citeauthoryear{Bruno et~al.}{2026}]{BST2024}
\begin{barticle}
\bauthor{\bsnm{Bruno}, \binits{O.P.}},
\bauthor{\bsnm{Santana}, \binits{M.}},
\bauthor{\bsnm{Trefethen}, \binits{L.N.}}:
\batitle{Evaluation of resonances: Adaptivity and {AAA} rational approximation of randomly scalarized boundary integral resolvents}.
\bjtitle{SIAM J. Sci. Comput.}
\bvolume{48}(\bissue{3}),
\bfpage{1260}--\blpage{1283}
(\byear{2026})
\doiurl{10.1137/24M1690680}
\end{barticle}
\endbibitem

\bibitem[\protect\citeauthoryear{Dem{\'e}sy et~al.}{2020}]{DAG2020}
\begin{barticle}
\bauthor{\bsnm{Dem{\'e}sy}, \binits{G.}},
\bauthor{\bsnm{Nicolet}, \binits{A.}},
\bauthor{\bsnm{Gralak}, \binits{B.}},
\bauthor{\bsnm{Geuzaine}, \binits{C.}},
\bauthor{\bsnm{Campos}, \binits{C.}},
\bauthor{\bsnm{Roman}, \binits{J.E.}}:
\batitle{Non-linear eigenvalue problems with {GetDP} and {SLEPc}: Eigenmode computations of frequency-dispersive photonic open structures}.
\bjtitle{Comput.\ Phys.\ Commun.}
\bvolume{257},
\bfpage{107509}
(\byear{2020})
\doiurl{10.1016/j.cpc.2020.107509}
\end{barticle}
\endbibitem

\bibitem[\protect\citeauthoryear{Garcia-Vergara et~al.}{2017}]{GMG2017}
\begin{barticle}
\bauthor{\bsnm{Garcia-Vergara}, \binits{M.}},
\bauthor{\bsnm{Dem{\'e}sy}, \binits{G.}},
\bauthor{\bsnm{Zolla}, \binits{F.}}:
\batitle{Extracting an accurate model for permittivity from experimental data: hunting complex poles from the real line}.
\bjtitle{Opt.\ Lett.}
\bvolume{42}(\bissue{6}),
\bfpage{1145}--\blpage{1148}
(\byear{2017})
\doiurl{10.1364/OL.42.001145}
\end{barticle}
\endbibitem

\bibitem[\protect\citeauthoryear{Gavin et~al.}{2018}]{GMP2018}
\begin{barticle}
\bauthor{\bsnm{Gavin}, \binits{B.}},
\bauthor{\bsnm{Mi{\k{e}}dlar}, \binits{A.}},
\bauthor{\bsnm{Polizzi}, \binits{E.}}:
\batitle{{FEAST} eigensolver for nonlinear eigenvalue problems}.
\bjtitle{J. Comput.\ Sci.}
\bvolume{27},
\bfpage{107}--\blpage{117}
(\byear{2018})
\doiurl{10.1016/j.jocs.2018.05.006}
\end{barticle}
\endbibitem

\bibitem[\protect\citeauthoryear{Ghani and Polifke}{2021}]{GP2021}
\begin{barticle}
\bauthor{\bsnm{Ghani}, \binits{A.}},
\bauthor{\bsnm{Polifke}, \binits{W.}}:
\batitle{An exceptional point switches stability of a thermoacoustic experiment}.
\bjtitle{J. Fluid Mech.}
\bvolume{920},
\bfpage{3}
(\byear{2021})
\doiurl{10.1017/jfm.2021.480}
\end{barticle}
\endbibitem

\bibitem[\protect\citeauthoryear{Golub and Van~Loan}{2013}]{GV2013}
\begin{bbook}
\bauthor{\bsnm{Golub}, \binits{G.H.}},
\bauthor{\bsnm{Van~Loan}, \binits{C.F.}}:
\bbtitle{Matrix Computations},
\bedition{4}th edn.
\bpublisher{Johns Hopkins University Press},
\blocation{Baltimore, MD, USA}
(\byear{2013})
\end{bbook}
\endbibitem

\bibitem[\protect\citeauthoryear{G{\"u}ttel and Tisseur}{2017}]{GT2017}
\begin{barticle}
\bauthor{\bsnm{G{\"u}ttel}, \binits{S.}},
\bauthor{\bsnm{Tisseur}, \binits{F.}}:
\batitle{The nonlinear eigenvalue problem}.
\bjtitle{Acta Numer.}
\bvolume{26},
\bfpage{1}--\blpage{94}
(\byear{2017})
\doiurl{10.1017/S0962492917000034}
\end{barticle}
\endbibitem

\bibitem[\protect\citeauthoryear{G\"uttel et~al.}{2024}]{GKV2024}
\begin{barticle}
\bauthor{\bsnm{G\"uttel}, \binits{S.}},
\bauthor{\bsnm{Kressner}, \binits{D.}},
\bauthor{\bsnm{Vandereycken}, \binits{B.}}:
\batitle{Randomized sketching of nonlinear eigenvalue problems}.
\bjtitle{SIAM J. Sci. Comput.}
\bvolume{46}(\bissue{5}),
\bfpage{3022}--\blpage{3043}
(\byear{2024})
\doiurl{10.1137/22M153656X}
\end{barticle}
\endbibitem

\bibitem[\protect\citeauthoryear{G\"uttel et~al.}{2022}]{GNT2022}
\begin{barticle}
\bauthor{\bsnm{G\"uttel}, \binits{S.}},
\bauthor{\bsnm{Negri~Porzio}, \binits{G.M.}},
\bauthor{\bsnm{Tisseur}, \binits{F.}}:
\batitle{Robust rational approximations of nonlinear eigenvalue problems}.
\bjtitle{SIAM J. Sci. Comput.}
\bvolume{44}(\bissue{4}),
\bfpage{2439}--\blpage{2463}
(\byear{2022})
\doiurl{10.1137/20M1380533}
\end{barticle}
\endbibitem

\bibitem[\protect\citeauthoryear{G\"uttel et~al.}{2014}]{SRK2014}
\begin{barticle}
\bauthor{\bsnm{G\"uttel}, \binits{S.}},
\bauthor{\bsnm{Van~Beeumen}, \binits{R.}},
\bauthor{\bsnm{Meerbergen}, \binits{K.}},
\bauthor{\bsnm{Michiels}, \binits{W.}}:
\batitle{N{LEIGS}: a class of fully rational {K}rylov methods for nonlinear eigenvalue problems}.
\bjtitle{SIAM J. Sci. Comput.}
\bvolume{36}(\bissue{6}),
\bfpage{2842}--\blpage{2864}
(\byear{2014})
\doiurl{10.1137/130935045}
\end{barticle}
\endbibitem

\bibitem[\protect\citeauthoryear{Huang et~al.}{2016}]{HSS2016}
\begin{barticle}
\bauthor{\bsnm{Huang}, \binits{R.}},
\bauthor{\bsnm{Struthers}, \binits{A.A.}},
\bauthor{\bsnm{Sun}, \binits{J.}},
\bauthor{\bsnm{Zhang}, \binits{R.}}:
\batitle{Recursive integral method for transmission eigenvalues}.
\bjtitle{J. Comput.\ Phys.}
\bvolume{327},
\bfpage{830}--\blpage{840}
(\byear{2016})
\doiurl{10.1016/j.jcp.2016.10.001}
\end{barticle}
\endbibitem

\bibitem[\protect\citeauthoryear{Keldysh}{1971}]{Keldysh1971}
\begin{barticle}
\bauthor{\bsnm{Keldysh}, \binits{M.V.}}:
\batitle{On the completeness of the eigenfunctions of some classes of non-selfadjoint linear operators}.
\bjtitle{Russian Mathematical Surveys}
\bvolume{26}(\bissue{4}),
\bfpage{15}--\blpage{44}
(\byear{1971})
\doiurl{10.1070/RM1971v026n04ABEH003985}
\end{barticle}
\endbibitem

\bibitem[\protect\citeauthoryear{Kressner}{2009}]{Kressner2009}
\begin{barticle}
\bauthor{\bsnm{Kressner}, \binits{D.}}:
\batitle{A block {N}ewton method for nonlinear eigenvalue problems}.
\bjtitle{Numer.\ Math.}
\bvolume{114}(\bissue{2}),
\bfpage{355}--\blpage{372}
(\byear{2009})
\doiurl{10.1007/s00211-009-0259-x}
\end{barticle}
\endbibitem

\bibitem[\protect\citeauthoryear{Lalanne et~al.}{2019}]{LYG2019}
\begin{barticle}
\bauthor{\bsnm{Lalanne}, \binits{P.}},
\bauthor{\bsnm{Yan}, \binits{W.}},
\bauthor{\bsnm{Gras}, \binits{A.}},
\bauthor{\bsnm{Sauvan}, \binits{C.}},
\bauthor{\bsnm{Hugonin}, \binits{J.-P.}},
\bauthor{\bsnm{Besbes}, \binits{M.}},
\bauthor{\bsnm{Dem{\'e}sy}, \binits{G.}},
\bauthor{\bsnm{Truong}, \binits{M.D.}},
\bauthor{\bsnm{Gralak}, \binits{B.}},
\bauthor{\bsnm{Zolla}, \binits{F.}},
\bauthor{\bsnm{Nicolet}, \binits{A.}},
\bauthor{\bsnm{Binkowski}, \binits{F.}},
\bauthor{\bsnm{Zschiedrich}, \binits{L.}},
\bauthor{\bsnm{Burger}, \binits{S.}},
\bauthor{\bsnm{Zimmerling}, \binits{J.}},
\bauthor{\bsnm{Remis}, \binits{R.}},
\bauthor{\bsnm{Urbach}, \binits{P.}},
\bauthor{\bsnm{Liu}, \binits{H.T.}},
\bauthor{\bsnm{Weiss}, \binits{T.}}:
\batitle{Quasinormal mode solvers for resonators with dispersive materials}.
\bjtitle{J. Opt.\ Soc.\ Am.\ A}
\bvolume{36}(\bissue{4}),
\bfpage{686}--\blpage{704}
(\byear{2019})
\doiurl{10.1364/JOSAA.36.000686}
\end{barticle}
\endbibitem

\bibitem[\protect\citeauthoryear{Li and Polizzi}{2025}]{LE2025}
\begin{barticle}
\bauthor{\bsnm{Li}, \binits{D.}},
\bauthor{\bsnm{Polizzi}, \binits{E.}}:
\batitle{Nonlinear eigenvalue algorithm for {$GW$} quasiparticle equations}.
\bjtitle{Phys. Rev. B}
\bvolume{111},
\bfpage{045137}
(\byear{2025})
\doiurl{10.1103/PhysRevB.111.045137}
\end{barticle}
\endbibitem

\bibitem[\protect\citeauthoryear{Mehrmann and Voss}{2004}]{MV2004}
\begin{barticle}
\bauthor{\bsnm{Mehrmann}, \binits{V.}},
\bauthor{\bsnm{Voss}, \binits{H.}}:
\batitle{Nonlinear eigenvalue problems: A challenge for modern eigenvalue methods}.
\bjtitle{GAMM-Mitteilungen}
\bvolume{27}(\bissue{2}),
\bfpage{121}--\blpage{152}
(\byear{2004})
\doiurl{10.1002/gamm.201490007}
\end{barticle}
\endbibitem

\bibitem[\protect\citeauthoryear{Neumaier}{1985}]{Neumaier1985}
\begin{barticle}
\bauthor{\bsnm{Neumaier}, \binits{A.}}:
\batitle{Residual inverse iteration for the nonlinear eigenvalue problem}.
\bjtitle{SIAM J. Numer. Anal.}
\bvolume{22}(\bissue{5}),
\bfpage{914}--\blpage{923}
(\byear{1985})
\doiurl{10.1137/0722055}
\end{barticle}
\endbibitem

\bibitem[\protect\citeauthoryear{Polizzi}{2009}]{Polizzi2009}
\begin{barticle}
\bauthor{\bsnm{Polizzi}, \binits{E.}}:
\batitle{Density-matrix-based algorithms for solving eigenvalue problems}.
\bjtitle{Phys.\ Rev.~B}
\bvolume{79},
\bfpage{115112}
(\byear{2009})
\doiurl{10.1103/physrevb.79.115112}
\end{barticle}
\endbibitem

\bibitem[\protect\citeauthoryear{Saad et~al.}{2019}]{SEM2019}
\begin{barticle}
\bauthor{\bsnm{Saad}, \binits{Y.}},
\bauthor{\bsnm{El-Guide}, \binits{M.}},
\bauthor{\bsnm{Mi{\k{e}}dlar}, \binits{A.}}:
\batitle{A rational approximation method for the nonlinear eigenvalue problem}.
\bjtitle{arXiv preprint arXiv:1901.01188}
(\byear{2019})
\doiurl{10.48550/arXiv.1901.01188}
\end{barticle}
\endbibitem

\bibitem[\protect\citeauthoryear{Sakurai and Sugiura}{2003}]{SS2003}
\begin{barticle}
\bauthor{\bsnm{Sakurai}, \binits{T.}},
\bauthor{\bsnm{Sugiura}, \binits{H.}}:
\batitle{A projection method for generalized eigenvalue problems using numerical integration}.
\bjtitle{J. Comput.\ Appl.\ Math.}
\bvolume{159}(\bissue{1}),
\bfpage{119}--\blpage{128}
(\byear{2003})
\doiurl{10.1016/S0377-0427(03)00565-X}
\end{barticle}
\endbibitem

\bibitem[\protect\citeauthoryear{Shao}{2026}]{shao2026stabilizing}
\begin{barticle}
\bauthor{\bsnm{Shao}, \binits{N.}}:
\batitle{Stabilizing the {R}ayleigh--{R}itz procedure by randomization}.
\bjtitle{arXiv preprint arXiv:2604.01037}
(\byear{2026})
\doiurl{10.48550/arXiv.2604.01037}
\end{barticle}
\endbibitem

\bibitem[\protect\citeauthoryear{Tang and Polizzi}{2014}]{TP2014}
\begin{barticle}
\bauthor{\bsnm{Tang}, \binits{P.T.P.}},
\bauthor{\bsnm{Polizzi}, \binits{E.}}:
\batitle{{FEAST} as a subspace iteration eigensolver accelerated by approximate spectral projection}.
\bjtitle{SIAM J.\ Matrix Anal.\ Appl.}
\bvolume{35}(\bissue{2}),
\bfpage{354}--\blpage{390}
(\byear{2014})
\doiurl{10.1137/13090866X}
\end{barticle}
\endbibitem

\bibitem[\protect\citeauthoryear{Tisseur and Higham}{2001}]{TH2001}
\begin{barticle}
\bauthor{\bsnm{Tisseur}, \binits{F.}},
\bauthor{\bsnm{Higham}, \binits{N.J.}}:
\batitle{Structured pseudospectra for polynomial eigenvalue problems, with applications}.
\bjtitle{SIAM J.\ Matrix Anal.\ Appl.}
\bvolume{23}(\bissue{1}),
\bfpage{187}--\blpage{208}
(\byear{2001})
\doiurl{10.1137/S0895479800371451}
\end{barticle}
\endbibitem

\bibitem[\protect\citeauthoryear{Trefethen}{2013}]{Trefethen2013}
\begin{bbook}
\bauthor{\bsnm{Trefethen}, \binits{L.N.}}:
\bbtitle{Approximation Theory and Approximation Practice}.
\bpublisher{Society for Industrial and Applied Mathematics (SIAM)},
\blocation{Philadelphia, PA, USA}
(\byear{2013}).
\doiurl{10.1137/1.9781611975949}
\end{bbook}
\endbibitem

\bibitem[\protect\citeauthoryear{Van~Beeumen et~al.}{2015}]{RKW2015}
\begin{barticle}
\bauthor{\bsnm{Van~Beeumen}, \binits{R.}},
\bauthor{\bsnm{Meerbergen}, \binits{K.}},
\bauthor{\bsnm{Michiels}, \binits{W.}}:
\batitle{Compact rational {K}rylov methods for nonlinear eigenvalue problems}.
\bjtitle{SIAM J. Matrix Anal. Appl.}
\bvolume{36}(\bissue{2}),
\bfpage{820}--\blpage{838}
(\byear{2015})
\doiurl{10.1137/140976698}
\end{barticle}
\endbibitem

\bibitem[\protect\citeauthoryear{Zhu and Knyazev}{2013}]{Zhu2013}
\begin{barticle}
\bauthor{\bsnm{Zhu}, \binits{P.}},
\bauthor{\bsnm{Knyazev}, \binits{A.V.}}:
\batitle{Angles between subspaces and their tangents}.
\bjtitle{J. Numer. Math.}
\bvolume{21}(\bissue{4}),
\bfpage{325}--\blpage{340}
(\byear{2013})
\doiurl{10.1515/jnum-2013-0013}
\end{barticle}
\endbibitem

\end{thebibliography}
\end{document}